\theoremstyle{plain}
\newtheorem{theorem}{Theorem}[section]
\newtheorem{Proposition}[theorem]{Proposition}
\newtheorem{Corollary}[theorem]{Corollary}
\newtheorem{example}[theorem]{Example}
\begin{document}

\graphicspath{ {PicturesArxiv/} } 

\title{Geometric methods for efficient planar swimming of copepod nauplii}

\author{Jonas Balisacan}
\address{University of Hawai`i at M\={a}noa, 2565 McCarthy Mall,  Honolulu, HI 96822, USA}
\email{jrab@hawaii.edu}

\author{Monique Chyba}
\address{University of Hawai`i at M\={a}noa, 2565 McCarthy Mall,  Honolulu, HI 96822, USA}
\email{chyba@hawaii.edu}

\author{Corey Shanbrom}
\address{California State University, Sacramento, 6000 J St., Sacramento, CA 95819, USA}
\email{corey.shanbrom@csus.edu}

\author{George Wilkens}
\address{University of Hawai`i at M\={a}noa, 2565 McCarthy Mall,  Honolulu, HI 96822, USA}
\email{grw@hawaii.edu}

\thanks{M.C. is partially supported by the Simons Foundation grant number 359510.}

\date{\today}

\keywords{Microswimmer; Planar Motion; Maximum Principle; Abnormal Extremals, Elastica}

\begin{abstract}
Copepod nauplii are larval crustaceans with important ecological functions.  Due to their small size, they experience an environment of low Reynolds number within their aquatic habitat.  Here we provide a mathematical model of a swimming copepod nauplius with two legs moving in a plane.  This model allows for both rotation and two-dimensional displacement by periodic deformation of the swimmer's body.  The system is studied from the framework of optimal control theory, with a simple cost function designed to approximate the mechanical energy expended by the copepod.  We find that this model is sufficiently realistic to recreate behavior similar to those of observed copepod nauplii, yet much of the mathematical analysis is tractable.  In particular, we show that the system is controllable, but there exist singular configurations where the degree of non-holonomy is non-generic.  We also partially characterize the abnormal extremals and provide explicit examples of families of abnormal curves. Finally, we numerically simulate normal extremals and observe some interesting and surprising phenomena.
\end{abstract}

\maketitle

\section{Introduction}

Microcrustaceans known as copepods are one of the most abundant animals on Earth. They are a type of zooplankton that serve as an important link in the marine food web. As they are prey for many larger aquatic creatures, they must adapt strategies that help them maximize their survivability. Observations have shown that different copepods have adapted different types of movement to efficiently forage for food and evade predators \cite{Bradley, Bruno,  Paffenhofer, copepod}.
Figure \ref{fig: copepod photo} shows a nauplius of the copepod species \textit{Bestiolina similis}.

In the world of microorganisms, water becomes a very viscous fluid in which movement produces negligible inertia. 
This is known as a low Reynolds number environment, as the Reynolds number $\mathcal{R}$ represents the ratio between the inertial force due to momentum and the viscous force experienced from the resistance of the liquid.
An object swimming in some fluid experiences the Reynolds number $\mathcal{R} = \frac{a v \rho}{\eta}$, where $a$ is the characteristic dimension of the object, $v$ is the velocity of the object, $\rho$ is the density of the fluid, and $\eta$ is the fluid viscosity.  In water, organisms as small as bacteria have a Reynolds number of approximately $10^{-6}$ to $10^{-4}$, whereas humans have a Reynolds number of approximately $10^{6}$.   
Nauplii of the paracalanid copepod \textit{Bestiolina similis}, as shown in Figure \ref{fig: copepod photo}, have lengths 70–200 $\mu$m and swim at Reynolds numbers of $10^{-1}-10^1$ (see \cite{Lenz} and the references therein).
To put swimming at low Reynolds number in perspective, humans have a Reynolds number around $10^{2}$ when swimming in molasses.

\begin{figure}
    \centering
\includegraphics[width=8.5 cm]{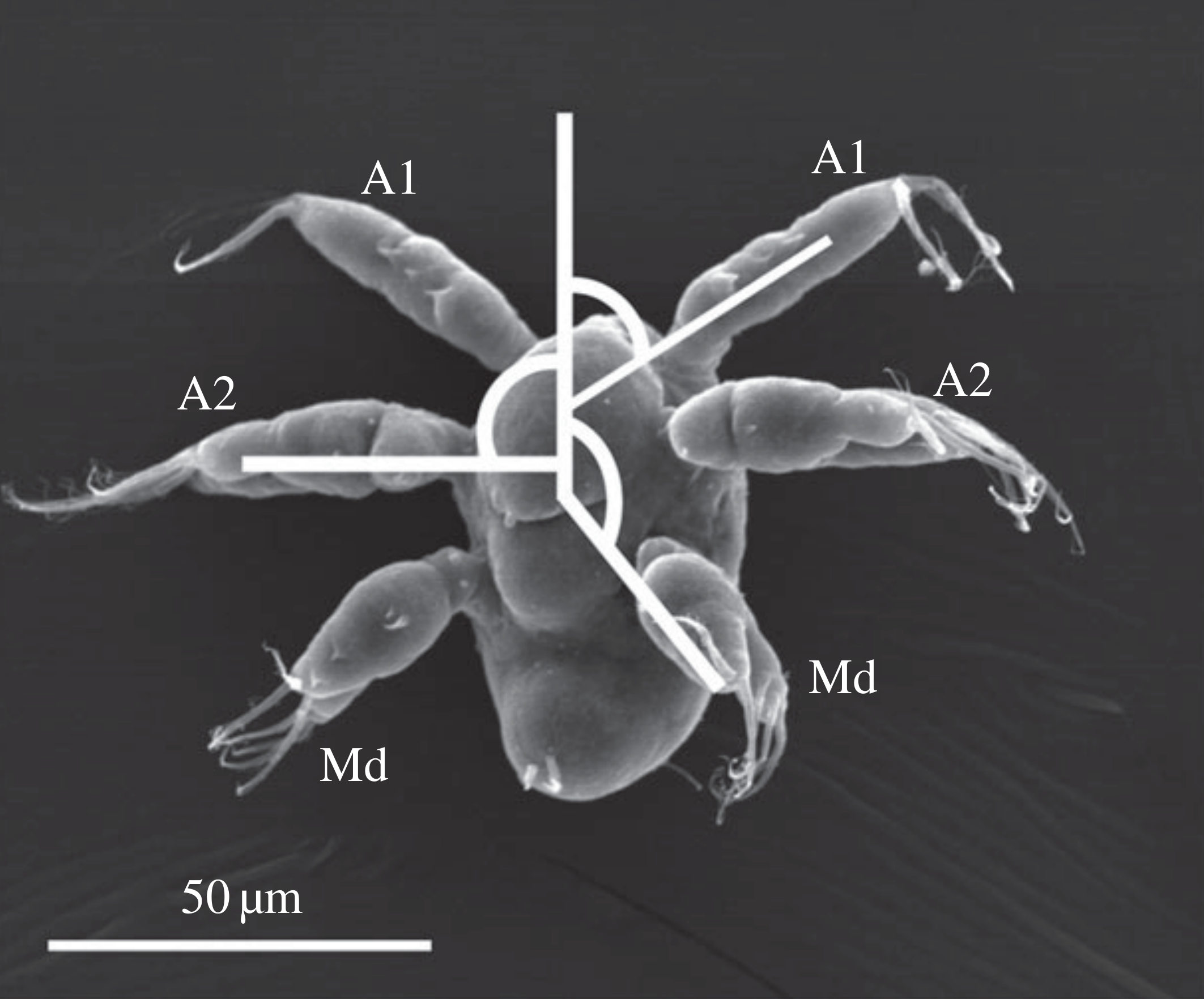}
\caption{Scanning electron micrograph of a larval copepod, showing three pairs of appendages: first antenna (A1), second antenna (A2) and mandible (Md). Image reproduced from \cite{optimal3} under Creative Commons Attribution 4.0 International License.
\label{fig: copepod photo}}
\end{figure}

Edward Mills Purcell's talk from 1976, Life at Low Reynolds Number (\cite{purcell}), first popularized the concept of swimming in an environment with low Reynolds number and was foundational in the study of microswimmers. He stated the scallop theorem, which says that complete reciprocal motion cannot produce any displacement when swimming at low Reynolds number. 
Here a reciprocal motion involves two sequences of motion where the second motion, called the recovery stroke, is the reverse of the first motion, which is called the power stroke. For humans swimming in water, the time it takes to do a stroke plays a role in the induced displacement due to the inertia terms in the Navier-Stokes equations.  So humans are able to swim forward by completing the second recovery stroke faster than the first.  But for microswimmers, this inertia is negligible, so any displacement induced by the first stroke is reversed by the second recovery stroke.  As a consequence, microorganisms must move in other ways, like utilizing a flagella or moving pairs of legs in an asynchronous manner. For example, for copepods that are bilaterally symmetrical, the symmetric pairs of legs move simultaneously,  but adjacent legs move out of sync in order to produce a net displacement. These observations are important as they could potentially be used to design microscopic robots that move in similar ways. One application involves using bacteria based nanoswimmers to transport drugs from a loading point to a destination such as cancer cells (\cite{nanoswimmer}). 


The main microorganisms of focus in this paper are copepods. Most types of copepod are only able to move in a motion called swimming-by-jumping (\cite{kinematics}). This motion is similar to the one described in the previous paragraph: it involves moving symmetric pairs of legs in a way such that adjacent pairs move asynchronously. In other words, the power stroke and recovery stroke alternate among each of the pairs of legs. One way to model this is by moving each symmetric pair of legs in a reciprocal motion while introducing a phase lag between each pair of legs (\cite{takagi}). 
As in such models, here we restrict the copepod's motion to a plane for simplicity, despite the fact that the actual animals live and move in three dimensions.

Here we model the copepod as a slender body in Stokes flow as in \cite{takagi,optimal1, optimal2,optimal3, devine,  book}. Other models of micro-swimmers capable of rotation appear in \cite{Dreyfus, Rizvi, Jalali}. 
It is important to note that real living copepods do indeed perform rotations to both evade predators (\cite{Robinson}) and to capture prey (\cite{Bruno}).
In \cite{rotational}, the authors analyze such rotational maneuvers (yaw, pitch and roll) via high-speed video observations of copepod larvae.

One dimensional translational motion for the copepod model has been well studied. 
It is possible to achieve positive displacement along an axis using as few as two pairs of legs moving in a reciprocal motion (\cite{takagi}). 
Methods from sub-Riemannian geometry and Hamiltonian dynamics have been used to find efficient optimal strokes in the translational case; efficiency was defined as the ratio between the displacement resulting from a stroke and the length of the stroke.  Numerical methods were used to determine the optimal strokes maximizing this efficiency (\cite{optimal1, optimal2,optimal3}).   Here the term stroke refers to a periodic motion of the legs.



Here we generalize this prior work by analyzing planar motions. To produce orientation changes we need to break the symmetry of the pair of legs. A first attempt was made in \cite{devine} by looking at three independent legs oscillating sinusoidally; here we generalize that approach to include all strokes but using two legs. 
We find that rotation by strokes is indeed possible with only two legs.   We also show that the two-legged system is controllable, although the difficulty in steering locally depends on the initial state; in other words, the system possesses singularities, which we classify.  Taking the mechanical energy expended as our cost function, we develop the two-legged copepod movement as an optimal control problem and apply the Pontryagin maximum principle (\cite{Pontryagin}) to study both the normal and abnormal extremals.  We partially characterize the abnormal extremals, and provide some explicit examples.  Finally, we utilize the optimal control software \texttt{Bocop} to simulate normal extremals (\cite{bocop1}). 
Among our simulations we find copepod motions which produce net rotation without net displacement,  we characterize the optimal 
motions which produce rotation with no conditions on displacement,  and we discover paths in the $xy$-plane which appear to be Euler elastica (\cite{elastica}).

 
\section{Methods}\label{sec: methods}

We consider a simplified copepod microswimmer in a low Reynolds number environment.  The idealized copepod consists of stiff slender legs and a body of negligible radius in comparison to the length of its legs.  In this section we will develop our mathematical model, derive the equations of motion, describe the copepod's motion as an optimal control problem, and develop the appropriate version of the maximum principle.

\subsection{Model}

We assume the copepod moves in a plane and possesses $6$ independently moving legs, three on each side of the body.   The position of the copepod at time $t$ can be described by the vector $ \left(x(t),  y(t), \phi(t) \right)^T$, where $x$ and $y$ represent the usual Cartesian coordinates on the plane and $\phi$ represents the orientation of the copepod with respect to the positive $x$-axis. Let $\theta_i$ denote the angle between the copepod's orientation and the $i^{th}$ leg, and let $\alpha_i=\theta_i+\phi$ denote the angle between the $i^{th}$ leg and the positive $x$-axis.   See Figure \ref{fig: model n=6} for an illustration. 

\begin{figure}
\includegraphics[width=10.5 cm]{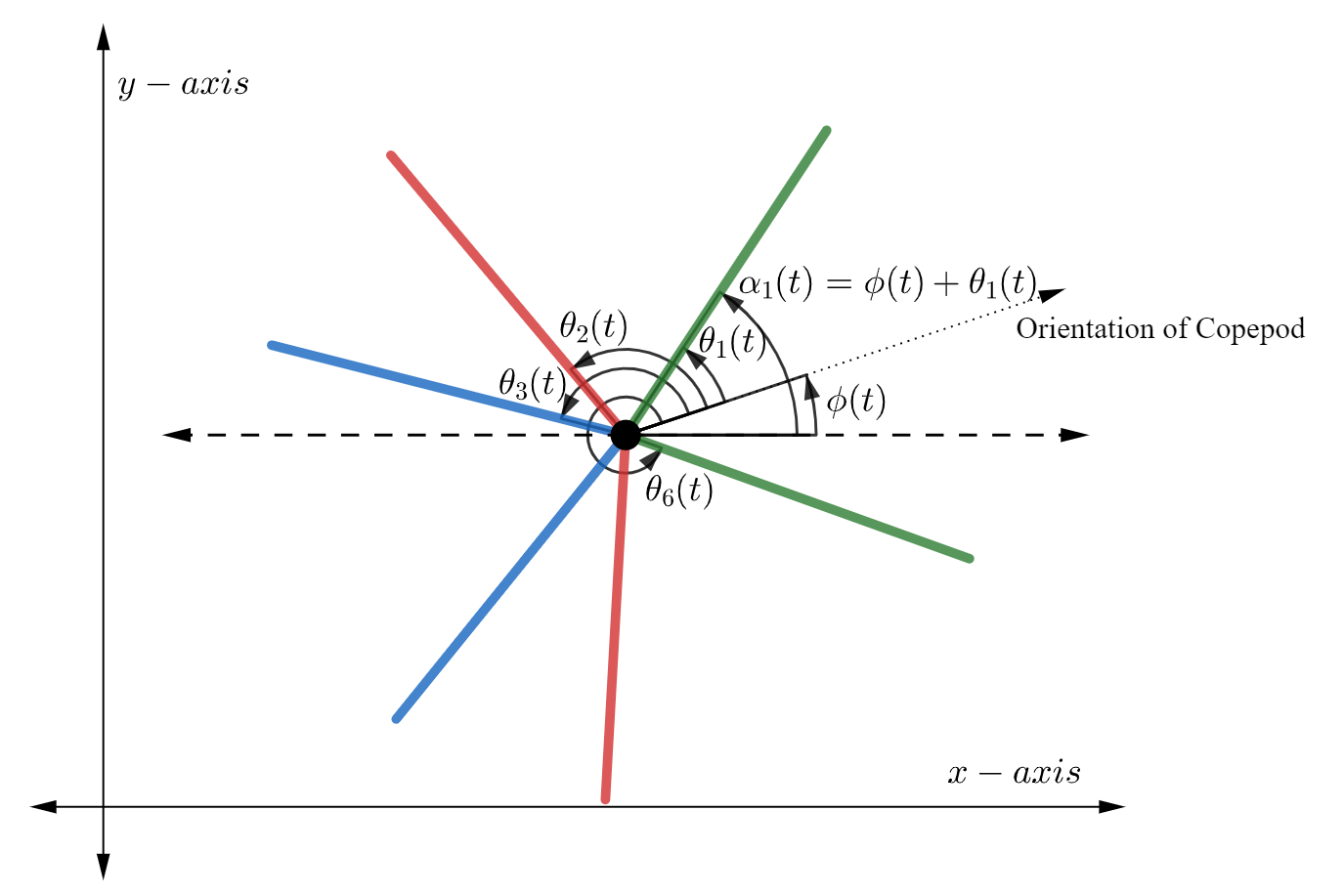}
\caption{The rotating copepod with $6$ legs. Note that angles $\theta_i$ are associated to the body frame while the angles $\alpha_i$ are associated to the inertial frame .  \label{fig: model n=6}}
\end{figure}
We denote the state of the copepod at time $t$ by the vector
$${q}(t)= \left( x(t),  y(t), \phi(t), \theta_1(t), \dots \theta_6(t) \right)^T,$$
while the position and orientation coordinates alone will be written as  $\hat{{q}}= \left( x(t),  y(t), \phi(t) \right)^T.$  Thus our configuration space is ostensibly $\mathbb R^{9}$, however in order to prevent the legs from passing each other,  we impose the constraint 
\begin{equation}\label{eq: constraint}
0 \leq \theta_1 \leq \theta_2 \leq \theta_3 
\leq \pi \leq \theta_4 \leq\theta_5 \leq\theta_6 \leq  2\pi.
\end{equation}


For the rest of this paper we will focus on a simplified copepod with two independent legs, one on each side of the body. See Figure \ref{fig: model n=2}. This simplification allows us to conduct a mathematical analysis and is justified by assuming the three legs on each side of the body are collapsed into one stronger leg. As will be seen in Section \ref{sec: discussion}, even with this simplification we obtain swimming motions reflecting actual laboratory observations. 

\begin{figure}
\includegraphics[width=10.9 cm]{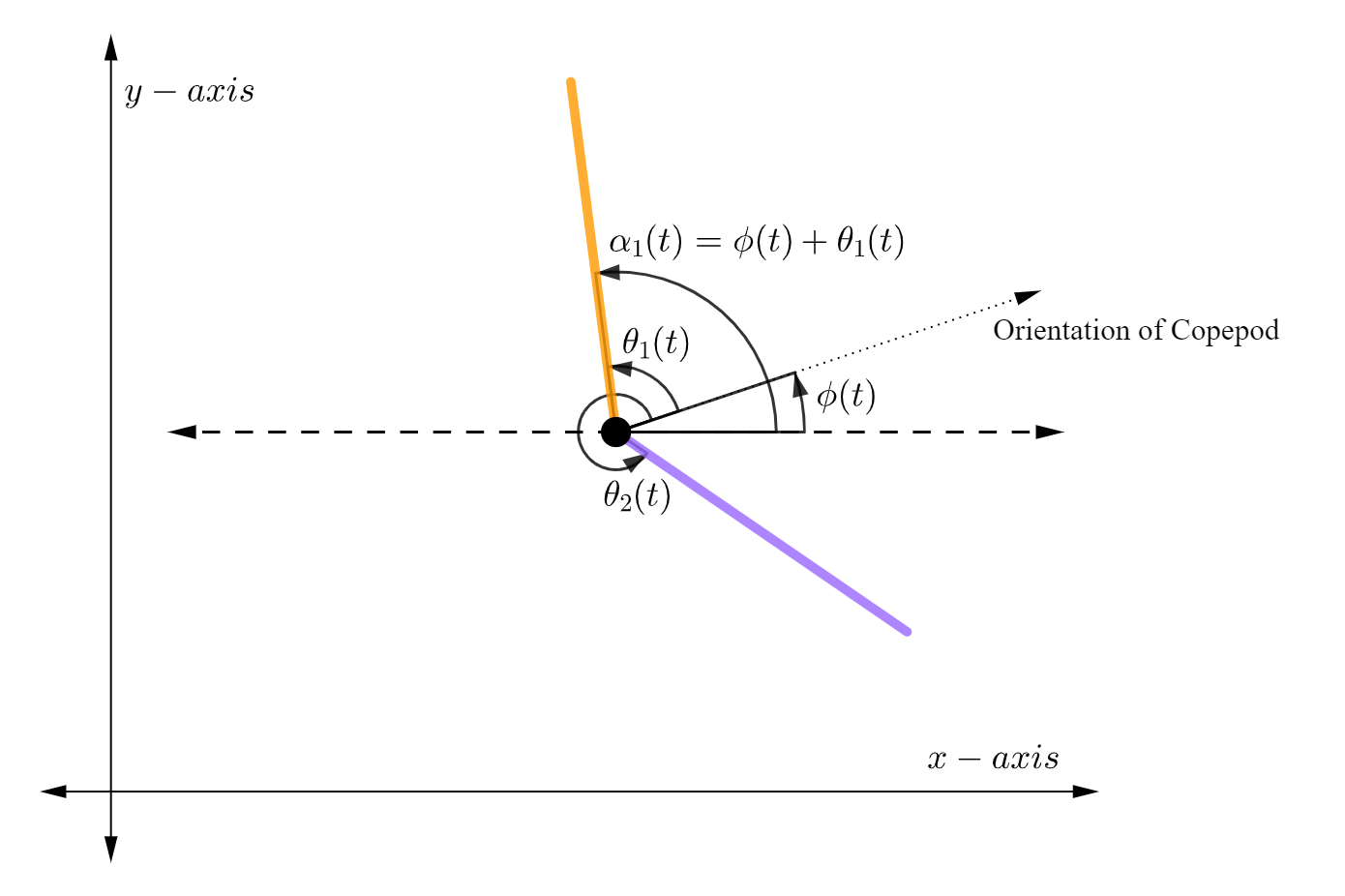}
\caption{The rotating copepod with $2$ legs.  \label{fig: model n=2}}
\end{figure}


In most of Section \ref{sec: results}, including all of Section \ref{subsec: abnormal}, we use standard techniques from optimal control (\cite{Pontryagin}) and sub-Riemannian geometry (\cite{SR book}).  In Section \ref{subsec: normal}, however, we utilize the optimal control software \texttt{Bocop}.  As stated in \cite{bocop1}, 
this software approximates our optimal control problem by a finite dimensional optimization problem using the direct transcription approach to time discretization. The resulting nonlinear programming problem is solved using the software package \texttt{Ipopt}, using sparse exact derivatives computed by \texttt{ADOL-C}.

\subsection{Equations of Motion}
We first develop the equations of motion for $n$ legs, then specify to the case $n=2$.
Our equations consist of a system of differential equations of the form ${M} \dot{\hat{{q}}} =  K$.  
The equations of motion for a copepod moving in two dimensions are derived in \cite{devine}, which focuses on legs moving in an oscillatory motion: $\theta_i(t)=a \cos(t+k_i)+\beta_i$.
Parameters are constrained to ensure that adjacent legs never overlap but possess a phase lag. 
The author shows that no net rotation is possible with such a motion for two legs, thus most of the analysis concerns the case of three legs.
For numerical simulations, the values of $a,k_1,\beta_1,\beta_2,$ and $\beta_3$ are fixed and the total change in orientation and displacement is computed for varying values of $k_2$ and $k_3$. 
The change in displacement and orientation is maximized when $(k_2,k_3)=(\frac{2 \pi}{3},\frac{4 \pi}{3})$ and $(k_2,k_3)=(\frac{4 \pi}{3},\frac{2 \pi}{3})$. 
In addition, the total work done by the microswimmer is calculated and a notion of turning efficiency is introduced.

The system is derived from slender-body Stokes flow, using the fact that, at low Reynolds number, inertial forces are negligible and the Navier-Stokes equations can  be linearized.   Here 
\begin{equation} \label{eq: K}
     K=\sum_{i=1}^n\dot\theta_i \left (\begin{array}{ccc}
 \sin\alpha_i \\
-\cos\alpha_i \\
-\frac{2}{3}
\end{array}\right )
\end{equation}
and $ M$ is the resistance matrix given by
\begin{equation} \label{eq: M}
     M=\left (\begin{array}{ccc}
 \sum (1+\sin^2\alpha_i) & -\sum(\sin\alpha_i\cos\alpha_i) & - \sum\sin\alpha_i\\
-\sum(\sin\alpha_i\cos\alpha_i) &  \sum(1+\cos^2\alpha_i) &  \sum\cos\alpha_i\\
- \sum\sin\alpha_i &  \sum\cos\alpha_i\ & 2
\end{array}\right ).
\end{equation}
By computing the mobility matrix $ M ^{-1}$ (well defined since $M$ is symmetric and positive definite) we obtain the equations of motion
\begin{equation}
\dot{\hat{{q}}}=  M ^{-1}  K.
\end{equation}

\subsection{Optimal Control Framework}
We now consider this system from the control theoretic point of view, where the angular velocities of the leg are taken as controls.  That is, we set $u_i=\dot \theta_i$, and assume these are measurable functions of time.
Now let 
\begin{equation} \label{Ki}
    {K}_i=\left (\begin{array}{ccc}
 \sin\alpha_i \\
-\cos\alpha_i \\
-\frac{2}{3}
\end{array}\right).
\end{equation}
Then our control vector fields are
\begin{equation}
{F}_i = \left( {M}^{-1}  {K}_i, 0, ..., 0, 1, 0, ..., 0 \right)^T,
\end{equation}
 where the $1$ appears in the $i^{th}$ entry after the ${M}^{-1} {K}_i$ entries. 
Then the copepod motion is described by the driftless affine control system
\begin{equation} \label{eq: Control System}
    {\dot q}(t)=\sum_{i=1}^n u_i(t) {F}_i({q}(t))
\end{equation} 
where $n$ refers to the number of legs. In this work we assume no bounds on the control.  In reality, of course, there are limits to how quickly an actual copepod can move its legs.  But some of this issue is mitigated by the fact that we will be minimizing some form of energy; see Equation (\ref{eq: translationalenergy2}) below.
We will see in the next section that, when $n=2$, this is a controllable system; that is, it is possible to find controls steering the copepod from any given initial state $q_{\text{initial}}$ to any given terminal state $q_{\text{final}}$. 

he rest of this paper will concern the special case of the copepod with two legs, so here we record these equations of motion explicitly.  
When $n=2$, we have
\begin{equation*}
\resizebox{1\hsize}{!}{$
{M}=\left (\begin{array}{ccc}
 2+\sin^2\alpha_1+\sin^2\alpha_2 & -\sin\alpha_1\cos\alpha_1 - \sin\alpha_2\cos\alpha_2 & - \sin\alpha_1- \sin\alpha_2\\
-\sin\alpha_1\cos\alpha_1 - \sin\alpha_2\cos\alpha_2 &  2+\cos^2\alpha_1+\cos^2\alpha_2 &  \cos\alpha_1+\cos\alpha_2\\
- \sin\alpha_1- \sin\alpha_2 &  \cos\alpha_1+\cos\alpha_2 & 2
\end{array}\right )$}
\end{equation*}
and 
$$
{K}=\dot\theta_1 \left (\begin{array}{ccc}
 \sin\alpha_1 \\
-\cos\alpha_1 \\
-\frac{2}{3}
\end{array}\right ) + \dot\theta_2 \left (\begin{array}{ccc}
 \sin\alpha_2 \\
-\cos\alpha_2 \\
-\frac{2}{3}
\end{array}\right )=\dot \theta_1 {K}_1 + \dot \theta_2 {K}_2.
$$ 
Our control system then takes the form 
\begin{equation} \label{eq: Control System 2 legs}
    {\dot q}(t) = u_1(t){F}_1(q(t))+u_2(t){F}_2(q(t).
\end{equation} 
where ${q}=(x, y, \phi, \theta_1, \theta_2)^t,$ our controls are $u_1=\dot \theta_1$ and $u_2=\dot \theta_2$, and the control vector fields are
$${F}_1= \left (\begin{array}{ccc}
{M}^{-1}{K}_1 \\
 1 \\
 0
\end{array}\right )
\qquad \text{and} \qquad
{F}_2= \left (\begin{array}{ccc}
{M}^{-1}{K}_2 \\
 0 \\
 1
\end{array}\right ).
$$
Straightforward calculations give
\begin{equation} \label{eq: 2legF1}
{F}_1= \left (\begin{array}{ccc}
-\frac{\sin(\theta_1-2\theta_2-\phi)-\sin(2\theta_1-\theta_2+\phi)+17\sin(\theta_1+\phi)-7\sin(\theta_2+\phi)}{24(\cos(\theta_1-\theta_2)-3)}\\
-\frac{\cos(\theta_1-2\theta_2-\phi)+\cos(2\theta_1-\theta_2+\phi)-17\cos(\theta_1+\phi)+7\cos(\theta_2+\phi)}{24(\cos(\theta_1-\theta_2)-3)}\\
\frac{1}{12}(\cos(\theta_1-\theta_2)-3)\\
 1 \\
 0
\end{array}\right ),
\end{equation}

\begin{equation} \label{eq: 2legF2}
{F}_2= \left (\begin{array}{ccc}
-\frac{\sin(\theta_1-2\theta_2-\phi)-\sin(2\theta_1-\theta_2+\phi)+17\sin(\theta_2+\phi)-7\sin(\theta_1+\phi)}{24(\cos(\theta_1-\theta_2)-3)}\\
-\frac{\cos(\theta_1-2\theta_2-\phi)+\cos(2\theta_1-\theta_2+\phi)-17\cos(\theta_2+\phi)+7\cos(\theta_1+\phi)}{24(\cos(\theta_1-\theta_2)-3)}\\
\frac{1}{12}(\cos(\theta_1-\theta_2)-3)\\
 0 \\
 1
\end{array}\right ).
\end{equation}

In the sequel we let $D$ denote the distribution spanned by $F_1$ and $F_2$.
Note that we can obtain ${F}_2$ from ${F}_1$ by switching the roles of $\theta_1$ and $\theta_2$.  Moreover, system (\ref{eq: Control System 2 legs}) is time-reversible; indeed the transformation $t\mapsto 2\pi-t, u_i(t)\mapsto -u_i(2\pi-t)$ sends $q(t)$ to $q(2\pi-t)$. This is a general consequence of swimming at low Reynolds number. The system is also invariant under rigid body transformations. It is obvious for translation as the $F_i$ do not depend on $x$ or $y$, but it can also be verified that $\phi\mapsto \phi+\tau$ is invariant under $x\mapsto x\cos\tau-y\sin\tau, y\mapsto x\sin\tau+y\cos\tau$.

In this paper we suppose that the copepod seeks to minimize the mechanical energy expended when moving from one position to another.  In \cite{optimal3}, the authors consider a two-legged copepod moving along an axis without rotation.  They describe a realistic but complicated mechanical energy functional, but show that the resulting optimal trajectories are qualitatively similar to those obtained when using the simplified energy
\begin{equation} \label{eq: translationalenergy2}
    E(u_1,u_2)=\int_{t_0}^{t_f}
\left(u_1^2+u_2^2\right) \, dt.
\end{equation}
Here $t_0$ is a fixed initial time, while $t_f$ is associated to the control $u$ in the following manner.  Choose some terminal boundary manifold $M_1 \subseteq \mathbb R^5$, which is closed, and define the target set by $M=[t_0, \infty) \times M_1$.  Then $t_f$ is the smallest time such that $(t_f, q(t_f))\in M$, where $q(t)$ is the state trajectory associated to the control $u(t)$.  

We therefore choose our cost function to be the energy $E$ given in (\ref{eq: translationalenergy2}) corresponding to the orthonormal inner product for the two control vector fields, yielding the following optimal control formulation.  Provided certain boundary conditions made explicit below, we seek solutions to the dynamical system (\ref{eq: Control System 2 legs}) which minimize the cost (\ref{eq: translationalenergy2}). Note that this is a sub-Riemannian problem associated to the flat metric (\cite{book2}). 

\subsection{Maximum Principle}
The Pontryagin maximum principle provides necessary conditions for a solution to be optimal. The general statement can be found in the literature \cite{book2, SR geo, Liberzon, Pontryagin}, and we here only state it for our application. 

Consider the optimal control problem stated in the previous section, defined by the dynamics (\ref{eq: Control System 2 legs}) and cost (\ref{eq: translationalenergy2}). 
Let $j, k \leq 5$ and consider the initial and terminal boundary manifolds
\begin{align*}
    M_0 &= \{q \in \mathbb R^5 \colon g_1(q)=g_2(q)=\cdots=g_j(q)=0\}\\
    M_1 &= \{q \in \mathbb R^5 \colon h_1(q)=h_2(q)=\cdots=h_k(q)=0\}.
\end{align*}
Define the Hamiltonian by 
\begin{equation}\label{eq: hamiltonian}
    H(q, u, p, p_0) = \langle p, u_1F_1 + u_2F_2 \rangle + p_0(u_1^2 + u_2^2).
\end{equation}

\begin{theorem}[Maximum Principle]\label{thm: max principle}
Let $u^* \colon [t_0, t_f] \to \mathbb R^2$ be an optimal control and let $q^* \colon [t_0, t_f] \to \mathbb R^5$ be the corresponding optimal state trajectory.  Then there exists a function $p^* \colon [t_0, t_f] \to \mathbb R^5$ and a constant $p_0^* \leq 0$ such that $(p_0^*,\, p^*(t)) \neq (0,0)$ for all $t \in [t_0, t_f]$ and having the following properties:
\begin{enumerate}
    \item $q^*$ and $p^*$ satisfy Hamilton's equations for the Hamiltonian (\ref{eq: hamiltonian}) with boundary conditions $q^*(t_0)\in M_0$ and $q^*(t_f)\in M_1$.
    \item $H(q^*(t), u^*(t), p^*(t), p^*_0) \geq H(q^*(t), u, p^*(t), p^*_0)$ for all $t\in [t_0, t_f]$ and $u \in \mathbb R^2$.
    \item $H(q^*(t), u^*(t), p^*(t), p^*_0)=0$ for all $t\in [t_0, t_f]$.
    \item The vector $p^*(0)$ is orthogonal to the tangent space $T_{q^*(t_0)}M_0$ and the vector $p^*(t_f)$ is orthogonal to the tangent space $T_{q^*(t_f)}M_1$.
\end{enumerate}
\end{theorem}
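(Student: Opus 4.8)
The statement is the Pontryagin maximum principle specialized to our free-terminal-time, smooth, driftless problem, so the plan is to run the standard needle-variation argument while recording what each ingredient becomes in our setting. \emph{First} I would convert the Lagrange problem into a Mayer problem by adjoining a cost coordinate $q^0$ with $\dot q^0 = u_1^2+u_2^2$ and $q^0(t_0)=0$; minimizing the energy (\ref{eq: translationalenergy2}) is then the same as minimizing $q^0(t_f)$ over trajectories of the augmented driftless system $\dot{\tilde q} = u_1 \tilde F_1 + u_2 \tilde F_2$ on $\mathbb R^6$ whose spatial components begin on $M_0$ and end on $M_1$. Since $F_1, F_2$ and the integrand are smooth and autonomous --- they depend on neither $t$ nor $(x,y)$ --- the augmented dynamics are $C^\infty$, which supplies all the regularity the argument needs.

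\emph{Next} I would fix the optimal pair $(u^*, q^*)$ and study the attainable set of the augmented system. Using needle (spike) variations of $u^*$ --- replacing $u^*$ by a constant value $u$ on a small interval of length $\varepsilon$ around a Lebesgue point $\tau$ and propagating the resulting endpoint perturbation forward by the variational (linearized) equation --- I would assemble the convex cone $\mathcal K$ of first-order endpoint displacements generated by finite packets of such needles together with variations of the endpoints along $T M_0$, $T M_1$ and of the terminal time $t_f$. Optimality forces the cost-decreasing direction (that of decreasing $q^0$) to lie outside the interior of $\mathcal K$; a supporting-hyperplane (Hahn--Banach) argument then produces a nonzero covector $(p_0^*, p^*(t_f))$ with $p_0^* \le 0$ normal to $\mathcal K$ at the optimal endpoint. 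Transporting this covector backward by the adjoint of the variational equation defines $p^*(t)$ and yields the costate equation $\dot p^* = -\partial H/\partial q$; together with the state equation $\dot q^* = \partial H/\partial p$, which holds by construction, this is item (1). Annihilation of the needle directions gives the pointwise maximization in item (2), and annihilation of the endpoint-tangent directions gives the orthogonality in item (4). The nontriviality $(p_0^*, p^*(t))\neq(0,0)$ is exactly the statement that the supporting covector is nonzero, propagated by the linear (hence invertible) adjoint flow.

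\emph{Finally}, the condition $H\equiv 0$ of item (3) combines two facts. Because the Hamiltonian is autonomous, $\frac{d}{dt} H(q^*,u^*,p^*,p_0^*)=0$ along any extremal, so $H$ is constant. Because the terminal time is free --- it is the first instant at which $q^*$ meets the closed target --- the variation of $t_f$ included in $\mathcal K$ forces the transversality-in-time relation $H(q^*(t_f),\dots)=0$, and constancy then propagates this to the whole interval.

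The main obstacle is the middle step: establishing that the cone $\mathcal K$ of endpoint variations is genuinely convex and that the optimal endpoint sits on its boundary, so that a separating hyperplane exists and the resulting multiplier is nonzero. Convexity is not immediate, since it requires showing that the first-order effects of several needles at distinct times superpose additively and that convex combinations are realized by suitable packets of needles in the limit $\varepsilon \to 0$ (a Lyapunov-type convexity statement for the reachable set); making this limiting and superposition argument rigorous is the technical heart of the proof, whereas the reduction, the adjoint transport, and the free-time computation are comparatively routine.
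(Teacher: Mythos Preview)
Your outline is a faithful sketch of the classical needle-variation proof of the Pontryagin maximum principle, and there is no genuine gap in it: the Mayer reduction, the construction of the terminal cone $\mathcal K$, the separating-hyperplane step, the backward transport of the covector by the adjoint equation, and the free-time transversality giving $H\equiv 0$ are exactly the standard ingredients (as in Pontryagin et al.\ or Liberzon, which the paper cites). Your identification of the convexity of $\mathcal K$ and the boundary position of the optimal endpoint as the technical crux is also accurate.

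The paper, however, does not prove this theorem at all. It explicitly says that ``the general statement can be found in the literature'' and that it is ``only stat[ing] it for our application,'' pointing to \cite{book2, SR geo, Liberzon, Pontryagin} and noting that the formulation ``closely follows Section 4.1.2 of \cite{Liberzon}.'' So there is no paper proof to compare against: you have supplied a proof sketch where the authors chose to invoke the result as background. If anything, your write-up is more than what the paper provides; it would serve as a reasonable appendix or a pointer to what the cited proofs actually do.
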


Note that this version of the maximum principle closely follows Section 4.1.2 of \cite{Liberzon}, with the addition of the initial boundary conditions $M_0$ and the associated transversality conditions.  It important, however, to recognize that Theorem \ref{thm: max principle} pertains to the free-time problem.  In our simulations in Section \ref{subsec: abnormal} we fix the time $[t_0, t_f]=[0, 2\pi]$, which requires a minor modification of the maximum principle.  As described in Section 4.3.1 of \cite{Liberzon}, one simply introduces an extra state variable to represent time, $q_6=t$, and includes the fixed terminal time in the terminal manifold $M_1$.  Then the free-time Theorem \ref{thm: max principle} applies with only the following modification: $H|_*=-p_6^*$, which is constant.  In the sequel we will refer to a trajectory satisfying the conclusions of the maximum principle as an extremal. 


\section{Results}\label{sec: results}

Most of the results in this section concern the two-legged copepod.  Unless explicitly stated otherwise, we assume $n=2$.  The motivation for this choice is contained in the next theorem, which says that one leg is insufficient for producing rotation via periodic strokes, but two legs are sufficient.  As in \cite{book}, we define a \emph{stroke} to be a periodic deformation of the swimmer's body.    That is, a stroke of period $T$ is any path in configuration space satisfying $\theta_i(0)=\theta_i(T)$ for all $i=1, \dots n$.  
For simulations and examples we also impose the following realistic constraint forcing each leg to stay on one side of the copepod's body:
\begin{equation}\label{eq: constraint 2 legs}
0 \leq \theta_1 \leq \pi \leq \theta_2 \leq 2\pi.
\end{equation}
However, most of the mathematical analysis in this section is valid for the configuration space $\mathbb R^2\times (S^1)^3$.
Note that  \cite{devine} proves that a two-legged copepod is incapable of producing net rotation via the specific oscillating strokes considered in their work.

\begin{theorem} A one-legged copepod moving in strokes can neither produce a net rotation nor net displacement.   A two-legged copepod moving in strokes can produce net rotation.
\end{theorem}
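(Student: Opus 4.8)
The plan is to split the theorem into its two assertions and treat them via the geometric structure of the control system.

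\medskip

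\noindent\textbf{One leg produces no net motion.}
For $n=1$ the configuration is $(x,y,\phi,\theta_1)$ and the single control vector field is $F_1=(M^{-1}K_1,\,1)^T$. A stroke forces $\theta_1(0)=\theta_1(T)$, and since $u_1=\dot\theta_1$ and there is only one control, the curve $t\mapsto\theta_1(t)$ is a closed loop in $S^1$ traversed along the single direction $F_1$. The key observation is that with one control the motion is confined to a one-dimensional integral curve: I would show that the full state $q(t)$ is determined as a function of $\theta_1$ alone. Concretely, writing the equations of motion with $\theta_1$ as the independent variable gives $\tfrac{d\hat q}{d\theta_1}=M^{-1}K_1$, a function of $(\phi,\theta_1)$ only for the $\phi$-component, so $\phi$ solves an autonomous ODE in $\theta_1$; I would check directly from (\ref{eq: 2legF1})-style formulas (specialized to $n=1$) that $\dot\phi=\tfrac{1}{12}(\cos 0 -3)\,u_1$ is a constant multiple of $u_1$, whence $\Delta\phi=c\oint u_1\,dt=c\,(\theta_1(T)-\theta_1(0))=0$. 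With $\Delta\phi=0$ the orientation returns, and then periodicity of $\theta_1$ makes the $x$- and $y$-integrands exact differentials in $\theta_1$, so $\Delta x=\Delta y=0$ as well. This reproduces Purcell's scallop theorem: a single periodic scalar degree of freedom yields reciprocal motion and hence no displacement.

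\medskip

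\noindent\textbf{Two legs can produce net rotation.}
Here the strategy is to exhibit a single explicit stroke achieving $\Delta\phi\neq 0$; an existence proof is cleaner than a general classification. The most transparent route uses the area/holonomy formula for driftless systems: for the closed loop $\gamma$ traced by $(\theta_1,\theta_2)$, the net change in the base variables is governed, to leading order in a small loop, by the curvature of the distribution $D=\mathrm{span}\{F_1,F_2\}$, i.e.\ by the bracket $[F_1,F_2]$. I would therefore compute the $\phi$-component of $[F_1,F_2]$ and show it is not identically zero. Since the $\phi$-components of $F_1$ and $F_2$ are both equal to $\tfrac{1}{12}(\cos(\theta_1-\theta_2)-3)$, which depends on $\theta_1-\theta_2$, the Lie bracket has a nonvanishing $\phi$-entry of the form (constant)$\cdot\sin(\theta_1-\theta_2)$ times the difference of the translational components; generically this is nonzero, so by Stokes' theorem a small loop enclosing nonzero signed area produces $\Delta\phi=\oint p\,d\theta_1+q\,d\theta_2\neq 0$ at second order. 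Alternatively, and more robustly, I would simply integrate (\ref{eq: Control System 2 legs}) along a concrete non-reciprocal loop, such as the rectangular loop alternating pure-$\theta_1$ and pure-$\theta_2$ segments (the Lie-bracket ``square''), and verify numerically or by the second-order expansion that $\Delta\phi\neq 0$.

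\medskip

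\noindent\textbf{Main obstacle.}
The one-leg argument is essentially bookkeeping once one sees that a single periodic control cannot escape its integral curve, so the delicate point is the two-leg claim: I must rule out the possibility that \emph{every} stroke gives $\Delta\phi=0$, which is exactly what \cite{devine} found for the restricted sinusoidal strokes. The crux is thus showing the $\phi$-component of $[F_1,F_2]$ does not vanish identically --- equivalently, that the rotational holonomy of $D$ is genuinely nontrivial and is not an artifact killed by the special phase-locked strokes. I expect the cleanest and least error-prone completion to be the explicit construction: choose one bracket-generating loop, expand the flow to second order in the loop size using the Baker--Campbell--Hausdorff formula (so that the leading term is $[F_1,F_2]$ evaluated at the basepoint), and read off a nonzero $\Delta\phi$ at a generic configuration such as $\theta_1=\tfrac{\pi}{2},\theta_2=\tfrac{3\pi}{2}$. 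This simultaneously proves existence and foreshadows the controllability result promised in the following section.
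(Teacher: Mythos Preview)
Your one-leg argument is structurally the paper's own: both observe that $\dot\phi$ is a constant multiple of $\dot\theta$, deduce $\Delta\phi=0$ from periodicity, and then see that the $x$- and $y$-integrands become exact differentials in $\theta$. One correction, though: you cannot obtain the $n=1$ dynamics by ``specializing'' the two-leg formula (\ref{eq: 2legF1}). The resistance matrix (\ref{eq: M}) changes when a leg is removed, and the paper's direct computation for $n=1$ yields $\dot\phi=-\tfrac{1}{9}\dot\theta$, not $\tfrac{1}{12}(\cos 0-3)\dot\theta=-\tfrac{1}{6}\dot\theta$. The argument survives because only constancy of the ratio matters, but the number you wrote is wrong.

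For two legs the routes genuinely diverge. The paper simply exhibits an explicit triangular stroke in the $(\theta_1,\theta_2)$-square (Example \ref{ex: triangle}), integrates the $\phi$-equation in closed form along each edge, and reads off $\Delta\phi=-\pi/6$. Your curvature/holonomy approach is more conceptual and in fact gives an \emph{exact} formula, not merely a leading-order one: since the $\phi$-components of $F_1$ and $F_2$ depend only on $(\theta_1,\theta_2)$, Stokes' theorem yields
\[
\Delta\phi \;=\; -\tfrac{1}{6}\iint_{\Omega}\sin(\theta_1-\theta_2)\,d\theta_1\,d\theta_2
\]
over the region $\Omega$ enclosed by the loop, so any loop for which this integral is nonzero works. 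This is arguably more illuminating and dovetails with the later controllability analysis, whereas the paper's construction hands you a concrete value and a picture. Two cautions on your execution: your description of the bracket's $\phi$-entry as ``(constant)$\cdot\sin(\theta_1-\theta_2)$ times the difference of the translational components'' is garbled---the translational pieces play no role, the entry is simply $-\tfrac{1}{6}\sin(\theta_1-\theta_2)$; and your suggested basepoint $(\theta_1,\theta_2)=(\tfrac{\pi}{2},\tfrac{3\pi}{2})$ lies exactly on the locus $\theta_1-\theta_2\equiv\pi\pmod{2\pi}$ where this curvature vanishes, so a small BCH square there gives zero at leading order. Choose a basepoint with $\theta_1-\theta_2\not\equiv 0,\pi$, or enclose an asymmetric region as the paper's triangle does.
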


\begin{proof}
For a copepod with one leg, we compute 
$$
 M=\left (\begin{array}{ccc}
 1+\sin^2\alpha & -\sin\alpha\cos\alpha & - \sin\alpha\\
-\sin\alpha\cos\alpha &  1+\cos^2\alpha &  \cos\alpha\\
- \sin\alpha &  \cos\alpha\ & 2
\end{array}\right )
\qquad \text{and} \qquad 
 K=\dot\theta \left (\begin{array}{ccc}
 \sin\alpha \\
-\cos\alpha \\
-\frac{2}{3}
\end{array}\right ).
$$ 
Then the equation $\dot{\hat{{q}}} =  M^{-1}  K$ simplifies to
$$
\left (\begin{array}{ccc}
 \dot x \\
 \dot y\\
 \dot \phi
\end{array}\right ) = \dot\theta \left (\begin{array}{ccc}
  \frac{4}{9} \sin\alpha  \\
- \frac{4}{9} \cos\alpha  \\
- \frac{1}{9} 
\end{array}\right ).
$$
Suppose the copepod moves in strokes of period $T$, so $\theta(0)=\theta(T)$.  Without loss of generality assume $\phi(0)=0.$ Then $\phi(t)=-\frac{1}{9} \theta(t)+\frac{1}{9} \theta(0)$. Because $\theta(t)$ is periodic, $\phi(t)$ is also periodic and thus no net rotation is produced after one period. 

Now we also have that $\alpha(t)=\theta(t)+\phi(t)=\frac{8}{9}\theta(t)+\frac{1}{9}\theta(0)$. To calculate the total $x$ displacement over the period we compute the integral
$$\int_0^T \dot x(t) \, dt=\int_0^T \frac{4}{9}\sin\left(\frac{8}{9}\theta(t)+\frac{1}{9}\theta(0)\right)\dot \theta(t) \, dt .$$
Computing this integral gives us
$$x(T)-x(0)=\left. -\frac{1}{2}\left( \cos\left(  \frac{8}{9}\theta(t)+\frac{1}{9} \theta(0) \right)  \right) \right\vert _0^T,$$
which equals zero again since $\theta$ is periodic.
Thus the displacement in the $x$ direction is 0; a similar argument gives the result for $y$.

 The proof of the second statement in the theorem is contained in the following example.
 \end{proof}
The next example demonstrates that two legs moving in strokes can produce a net change in displacement and orientation.

\begin{example}\label{ex: triangle}
Consider the two-legged copepod with initial configuration
$${q}(0)=(x(0), y(0), \phi(0), \theta_1(0), \theta_2(0))^T=(0, 0, 0, 0, \pi)^T.$$
We consider the following motion. Both legs rotate $\pi$ radians counter-clockwise in $\pi$ time; then one at a time, each leg moves $\pi$ radians clockwise in $\frac{\pi}{2}$ time.  
See Figure \ref{fig: triangle}.  Explicitly, we have 
$$
\theta_1(t) = \begin{cases} 
          t & 0\leq t< \pi \\
          -2t+3\pi & \pi\leq t < \frac{3 \pi}{2} \\
          0 & \frac{3 \pi}{2} \leq t < 2\pi
       \end{cases} ,
       \indent 
       \theta_2(t) = \begin{cases} 
          t+\pi & 0\leq t< \pi \\
          2\pi & \pi\leq t < \frac{3 \pi}{2} \\
          -2t+5 \pi & \frac{3 \pi}{2} \leq t < 2\pi.
       \end{cases}
$$
We can explicitly solve the equations of motion for the orientation over time,
$$
\phi(t) = \begin{cases} 
          -\frac{2}{3}t & 0\leq t < \pi \\
          \frac{1}{12}\sin\left( 2t\right) +\frac{1}{2}t-\frac{7\pi}{6}& \pi\leq t \leq 2\pi, \\
       \end{cases}
$$
which implies a net rotation of
$\phi(2\pi) - \phi(0)=-\frac{\pi}{6} \neq 0.$  The orientation over time, along with the displacements over time obtained by numerical integration,  are shown in Figure \ref{fig: ex rot}.  
The net change in position, equal to the final position, is given by 
$$\hat{{q}}\left( 2\pi)=(x(2\pi),y(2\pi),\phi(2\pi) \right)^T=(0.0071,0.0019,-{\pi}/{6})^T.$$
\end{example}

\begin{figure}[H]
    \centering
\includegraphics[width=8 cm]{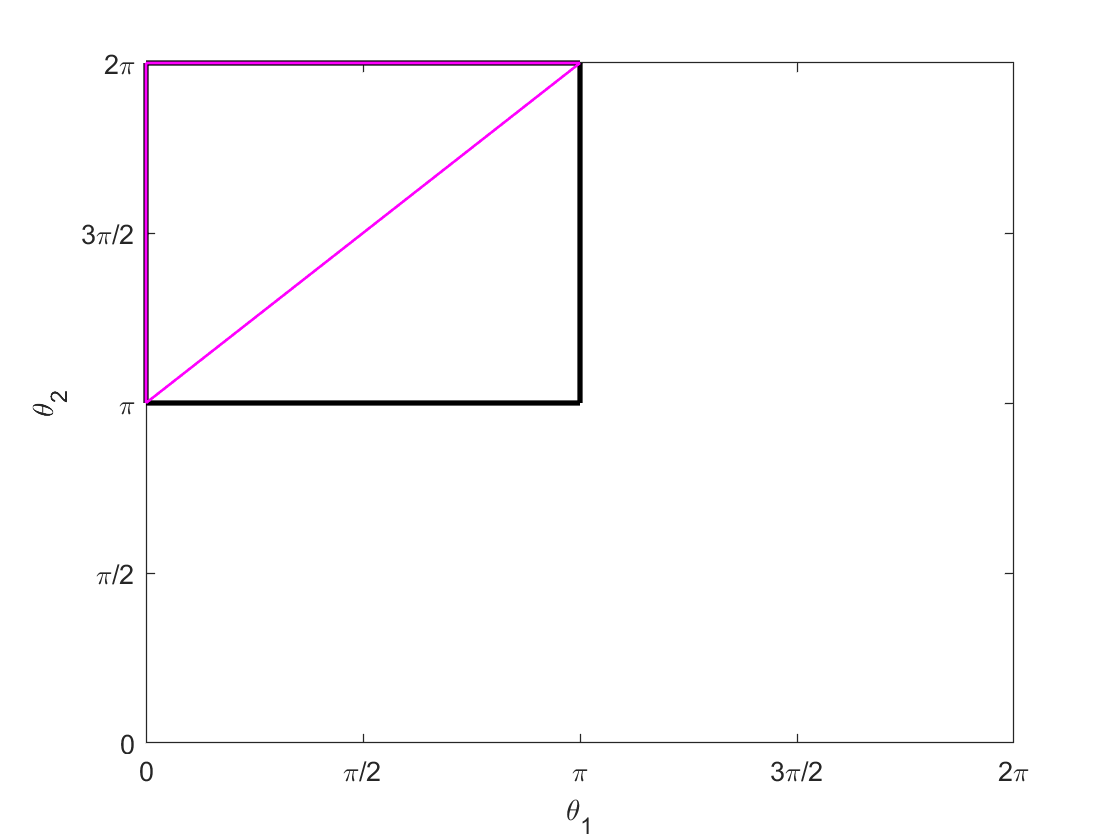}
    \caption{The motion described in Example \ref{ex: triangle} traces out the pink triangle in the $\theta_1\theta_2$-plane, counter-clockwise from the bottom left vertex. The black box represents the constraint (\ref{eq: constraint 2 legs}).\label{fig: triangle}}
\end{figure}

\begin{figure}[H]
    \subfigure[]{\includegraphics[width=6.5 cm]{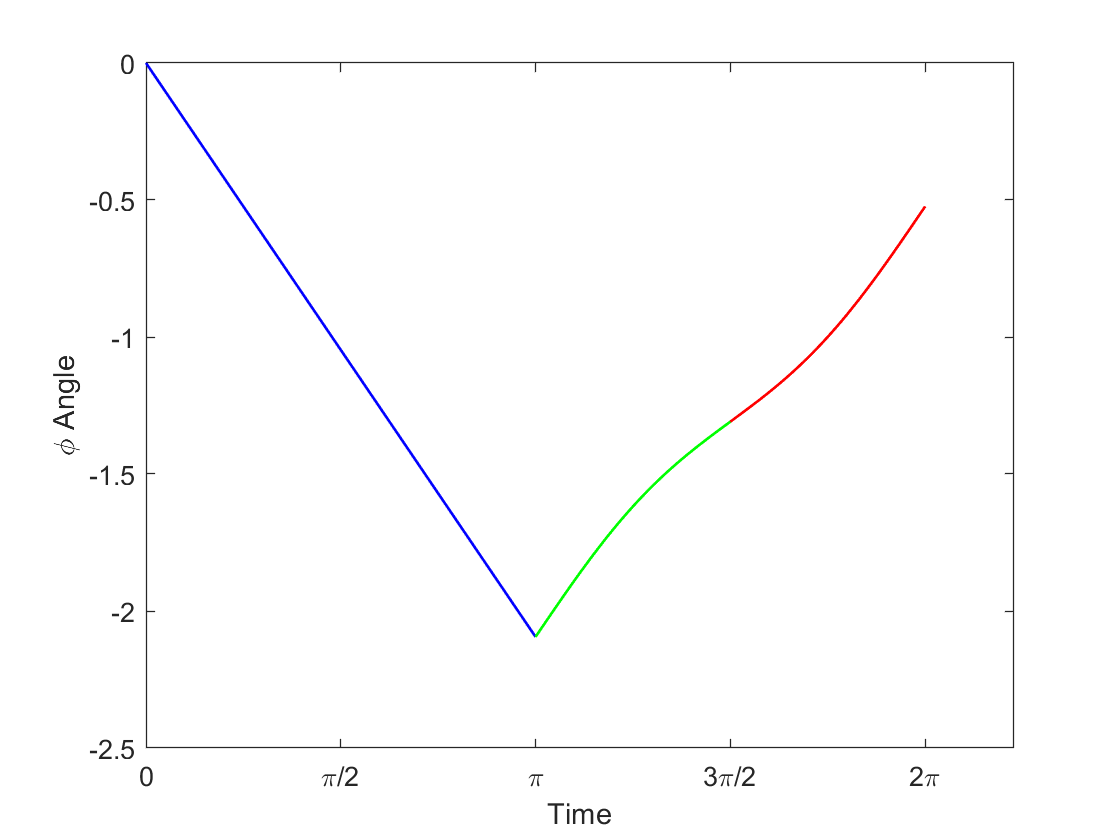}} 
    \subfigure[]{\includegraphics[width=6.5 cm]{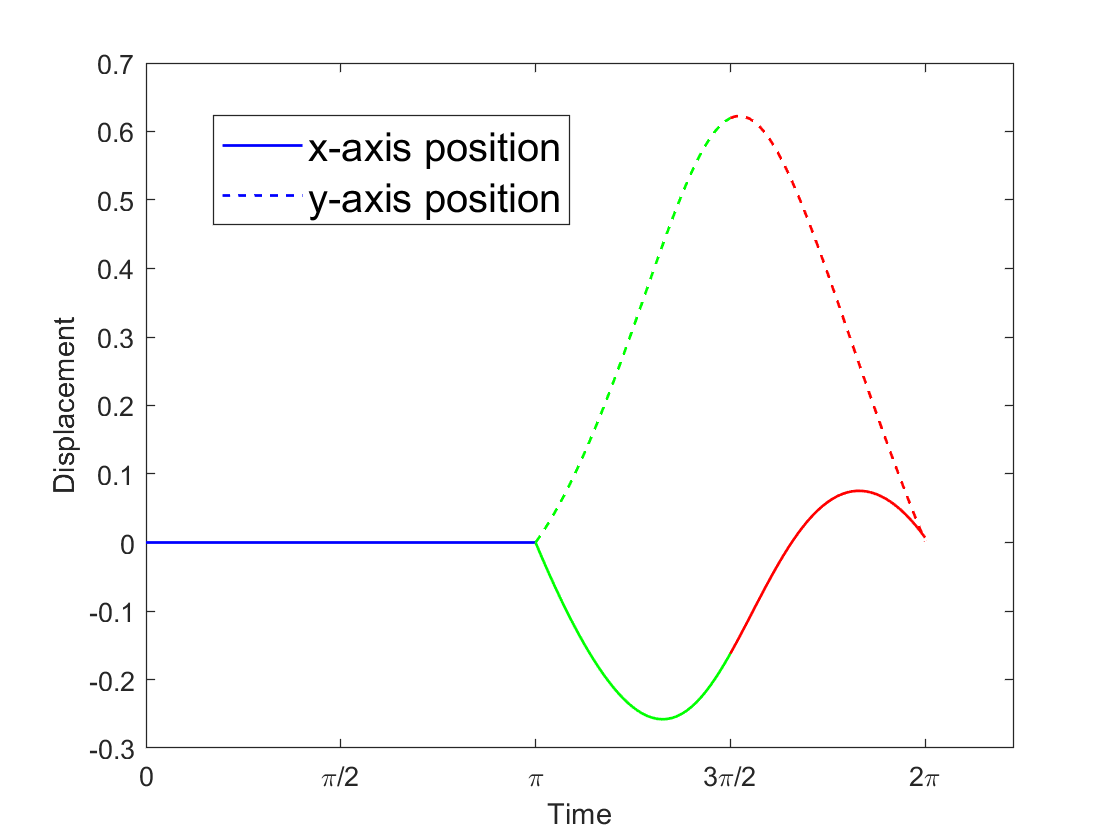}}
    \caption{Motion in Example \ref{ex: triangle}; the three colors correspond to the three legs of the triangle in Figure \ref{fig: triangle}.  (\textbf{a}) Orientation $\phi(t)$.  (\textbf{b}) Displacements $x(t)$ and $y(t)$.\label{fig: ex rot}}
\end{figure}



Note that the energy (\ref{eq: translationalenergy2}) for the motion in Example \ref{ex: triangle} is equal to $6\pi$.  
While this motion is dynamically valid, it is likely not minimizing the cost.  See Section \ref{subsec: normal} for further discussion.  


\subsection{Controllability}  
\label{section-controllability}

In this subsection we show that the two-legged copepod is indeed a controllable system.  That is, given any initial and final configuration, controls exist which steer the copepod from the initial to the final configuration.  The main tool here is the Chow-Rashevskii theorem; a formal statement, along with definitions of all the terminology in this subsection, can be found in \cite{SR book}.  The proofs here are essentially just calculations, which we performed using MATLAB and Mathematica.  

Our two control vector fields $F_1$ and $F_2$ are given by (\ref{eq: 2legF1}) and (\ref{eq: 2legF2}).  Denote their iterated Lie brackets (which are too complicated to display) by 
\begin{equation*}
F_3 = [F_1, F_2], \quad F_4=[F_1, F_3],\quad F_5=[F_2, F_3]. 
\end{equation*}

Note that $F_1$ and $F_2$, and consequently their iterated brackets,  only depend on $\theta_1, \theta_2$, and $\phi$.    A computation shows that the five vector fields $F_1, F_2, F_3, F_4,$ and $F_5$ are linearly dependent if and only if 
\begin{equation}\label{eq: det1}
{\sin^4\left(\frac{\psi}{2}\right) \left( 25 \cos(2 \psi)+120 \cos(\psi)+ 79 \right)}=0
\end{equation}
where $\psi=\theta_1-\theta_2$.
Note that this is only a condition on two of our five variables.  Solving (\ref{eq: det1}) yields two sets of solutions:
$\psi=2\pi n$ and $\psi=2\pi n \pm 2\arctan(2) $ for $n \in \mathbb Z$.  We therefore have two sets of configurations which are singular for our distribution $D$:
\begin{align*}
S_1 &= \{ {q}\, |\, \theta_1-\theta_2=2\pi n \pm 2\arctan(2)\, \text{for some}\, n \in \mathbb Z\} \\
S_2 &= \{ {q} \,|\, \theta_1-\theta_2=2\pi n\, \text{for some}\, n \in \mathbb Z\}.
\end{align*}
These are illustrated in the $\theta_1\theta_2$-plane in Figure \ref{fig: sing sets}.

\begin{figure}[H]
\centering
\includegraphics[width=8.5 cm]{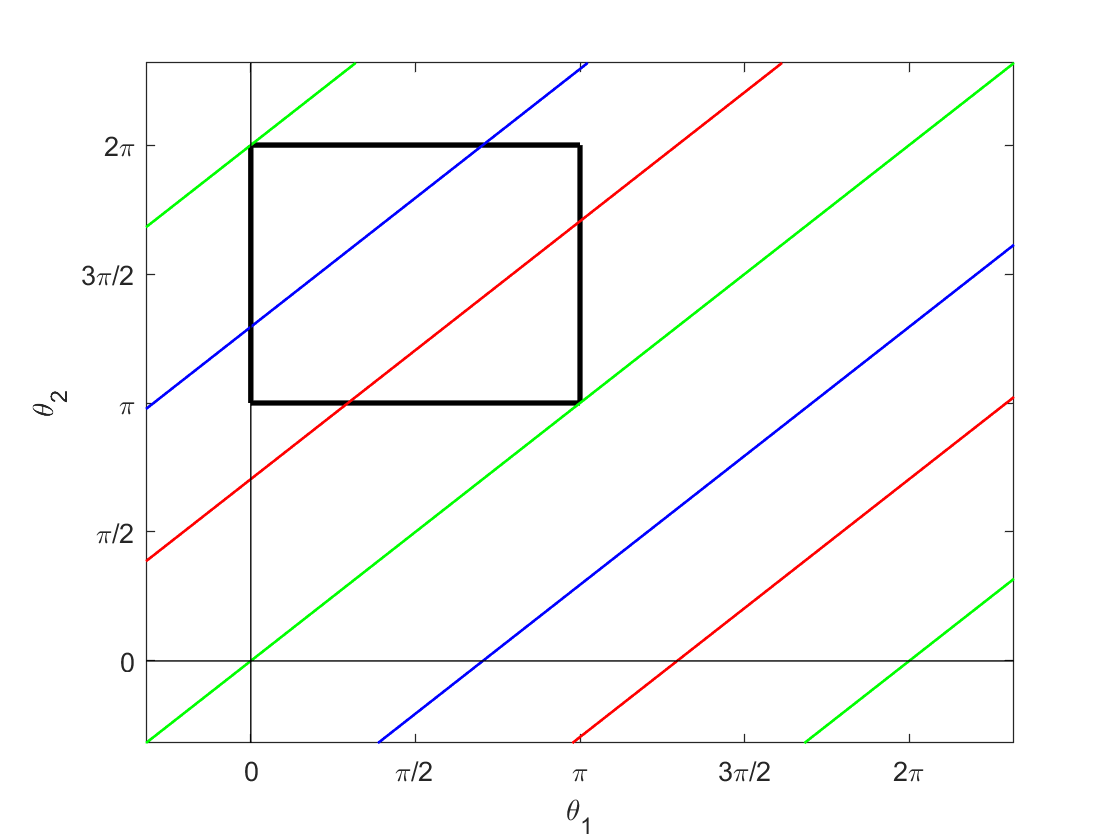}
    \caption{The singular sets for $D$.  The green lines are given by $\psi=2\pi n$, the red lines by $\psi=2\pi n - 2\arctan(2)$, and the blue lines by $2\pi n + 2\arctan(2)$, where $n\in \mathbb Z$ and $\psi=\theta_1-\theta_2$.  The black box again shows the constraint (\ref{eq: constraint 2 legs}).\label{fig: sing sets}}
\end{figure}

\begin{theorem}\label{thm: sing}
If ${q} \in S_1$ then the small growth vector at ${q}$ is (2, 3, 4, 5).  
If ${q} \in S_2$ then the small growth vector at ${q}$ is (2, 2, 3, 4, 5).    All other points are regular with small growth vector  (2, 3, 5).
\end{theorem}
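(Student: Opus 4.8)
The plan is to build the flag of $D$ by iterated brackets and, at each level, identify precisely which bracket first escapes the previous span. Recall that the small growth vector at $q$ records $(\dim D^1(q), \dim D^2(q), \dots)$, where $D^1=D$ and $D^{k+1}=D^k+[D,D^k]$; concretely $D^2=\operatorname{span}\{F_1,F_2,F_3\}$ and $D^3=\operatorname{span}\{F_1,F_2,F_3,F_4,F_5\}$, matching the fields named in the controllability discussion. Since $F_1$ and $F_2$ carry final components $(1,0)$ and $(0,1)$ in the $(\theta_1,\theta_2)$ slots, they are everywhere independent, so $\dim D^1\equiv 2$ and the first entry is always $2$. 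Because $F_1,F_2$ and all their brackets depend only on $(\phi,\theta_1,\theta_2)$ and the system is rotationally invariant, I would evaluate every determinant along $\phi=0$, reducing each rank condition to a function of $\psi=\theta_1-\theta_2$.

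For the regular stratum I would invoke the computation already recorded in \eqref{eq: det1}: the $5\times 5$ determinant $\det[F_1\,F_2\,F_3\,F_4\,F_5]$ vanishes exactly on $S_1\cup S_2$, so off that set $\dim D^3=5$. To fix the middle entry at $3$, I would exhibit a single $3\times 3$ minor of $[F_1\,F_2\,F_3]$ whose zero locus is precisely $S_2$; this shows $F_3\notin\operatorname{span}\{F_1,F_2\}$ away from $S_2$ and yields the growth vector $(2,3,5)$ at regular points.

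The singular strata I would treat using the swap involution $\sigma\colon\theta_1\leftrightarrow\theta_2$, which fixes $S_2$ pointwise and, by the symmetry relating \eqref{eq: 2legF1} and \eqref{eq: 2legF2}, satisfies $\sigma_*F_1=F_2$, hence $\sigma_*F_3=-F_3$, $\sigma_*F_4=-F_5$, $\sigma_*F_5=-F_4$. On $S_2$ the first three components of $F_1$ and $F_2$ coincide, so $F_1-F_2=(0,0,0,1,-1)^T$ spans the $(-1)$-eigenspace of $d\sigma$, while the $(+1)$-eigenspace is $\operatorname{span}\{e_1,e_2,e_3,e_4+e_5\}$. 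Any $\sigma$-anti-invariant field lies in the $(-1)$-eigenspace at a fixed point; thus $F_3\in\operatorname{span}\{F_1-F_2\}\subset\operatorname{span}\{F_1,F_2\}$ on $S_2$, forcing $\dim D^2=2$ there. The same decomposition shows $F_4+F_5$ is anti-invariant (again in $\operatorname{span}\{F_1,F_2\}$) while $F_4-F_5$ is invariant, so on $S_2$ the level-three span collapses to $\operatorname{span}\{F_1,F_2,F_4-F_5\}$; checking one $3\times 3$ minor that $F_4-F_5\notin\operatorname{span}\{F_1,F_2\}$ on $S_2$ gives $\dim D^3=3$. I would then compute the length-four and length-five brackets $[F_i,F_4],[F_i,F_5]$ and verify, via one nonvanishing minor at each stage, that the dimension climbs $3\to 4\to 5$ along $S_2$, producing $(2,2,3,4,5)$. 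For $S_1$ the involution has no fixed point ($\psi\neq 0$), so $F_3$ stays independent and $\dim D^2=3$; since \eqref{eq: det1} still vanishes, $\dim D^3\le 4$, and a nonzero $4\times 4$ minor shows equality. Adjoining the length-four bracket $[F_1,F_4]$ and exhibiting a nonvanishing $5\times 5$ determinant on $S_1$ then gives $\dim D^4=5$ and the growth vector $(2,3,4,5)$.

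The main obstacle is the $S_2$ stratum, since it demands carrying the bracket computation out to length five and, at each of the three successive rank jumps, locating a minor that is nonzero on \emph{all} of $S_2$ rather than merely generically. The symmetry reduction above is precisely what makes this manageable: it collapses each level to a single candidate direction ($F_3$, then $F_4-F_5$, then one further bracket) whose independence is a one-minor check, replacing an unstructured rank computation of many fields with a short sequence of scalar non-vanishing verifications best handled in a computer algebra system.
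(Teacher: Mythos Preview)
Your plan is correct and, at bottom, follows the same route as the paper: compute the iterated brackets $F_3,\dots,F_{10}$ and determine the rank of the appropriate collection at each level via a computer algebra system. The paper simply reports the computed relations (on $S_1$: $F_4=-F_5$ with $F_1,F_2,F_3,F_4,F_6$ independent; on $S_2$: $F_3=F_7=F_8=0$, $F_4=-F_5$, $F_6=-F_9$, with $F_1,F_2,F_4,F_6,F_{10}$ independent) and reads off the growth vectors. Your contribution beyond the paper is the use of the swap involution $\sigma$ on $S_2$: this is a genuine conceptual improvement, since it \emph{predicts} the collapses that the paper merely observes from computation. In fact your symmetry argument yields more than you state: because every bracket $F_k$ with $k\geq 3$ has zero $(\theta_1,\theta_2)$-components, any $\sigma$-anti-invariant bracket must actually \emph{vanish} on $S_2$ (the $(-1)$-eigenspace is $\operatorname{span}\{e_4-e_5\}$), so you recover $F_3=0$ and $F_4+F_5=0$ exactly, not just containment in $\operatorname{span}\{F_1,F_2\}$; combined with the Jacobi identity $F_7=F_8$ you also get $F_7=F_8=0$. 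Two small quibbles: the phrase ``the involution has no fixed point, so $F_3$ stays independent'' on $S_1$ is a non sequitur---independence there follows from the $3\times3$ minor you already arranged to vanish only on $S_2$; and the paper's specific observation $F_4=-F_5$ on $S_1$ (which your symmetry cannot see) is what makes the $4\times4$ minor check straightforward.
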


\begin{proof}
The fact that the small growth vector is (2, 3, 5) at generic points is immediate from the fact that  $F_1, F_2, F_3, F_4,$ and $F_5$ are linearly independent there.  These points are regular since the singular sets are closed.  Points in the singular sets are analyzed by computations. Let
\begin{equation*}
F_6 = [F_1, F_4], \quad F_7=[F_1, F_5],\quad F_8=[F_2, F_4],  
\quad F_9 = [F_2, F_5], \quad F_{10}=[F_1, F_6].
\end{equation*}

For points in $S_1$, we find that $F_4=-F_5$, but $F_1, F_2, F_3, F_4,$ and $F_6$ are linearly independent.  For points in $S_2$, we find that $F_3=F_7=F_8=0$ and $F_4=-F_5$ and $F_6=-F_9$, but $F_1, F_2, F_4, F_6,$ and $F_{10}$ are linearly independent.
\end{proof}

\begin{Corollary}
The two-legged copepod system is controllable at all points.  At generic points the degree of non-holonomy is 3.  At points in $S_1$ the degree of non-holonomy is 4. At points in $S_2$ the degree of non-holonomy is 5.
\end{Corollary}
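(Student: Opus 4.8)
The plan is to obtain both conclusions as immediate consequences of Theorem \ref{thm: sing} and the Chow-Rashevskii theorem \cite{SR book}; essentially no new computation is required, since all of the bracket information has already been assembled. The crucial observation is that the configuration space $\mathbb R^2\times(S^1)^3$ is five-dimensional, and that in each of the three cases of Theorem \ref{thm: sing} the small growth vector terminates in the value $5$. Thus at \emph{every} configuration the iterated Lie brackets of $F_1$ and $F_2$ eventually span the full tangent space, i.e. the distribution $D$ is bracket-generating everywhere.

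First I would establish controllability. The control system (\ref{eq: Control System 2 legs}) is driftless, so it suffices to verify the bracket-generating condition, which is exactly the statement that $D^r(q)=T_q(\mathbb R^2\times(S^1)^3)$ for some $r$ at each $q$. Since the singular sets $S_1$ and $S_2$ together with the generic locus exhaust the configuration space, Theorem \ref{thm: sing} guarantees this at every point. The Chow-Rashevskii theorem then implies that any two configurations can be joined by a horizontal trajectory, which is precisely the asserted controllability at all points.

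Next I would read off the degrees of non-holonomy. By definition the degree of non-holonomy at $q$ is the least $r$ for which $D^r(q)=T_qM$, equivalently the number of entries in the small growth vector at $q$. From Theorem \ref{thm: sing} the generic growth vector $(2,3,5)$ has three entries, the vector $(2,3,4,5)$ on $S_1$ has four, and the vector $(2,2,3,4,5)$ on $S_2$ has five, yielding degrees $3$, $4$, and $5$ respectively. I do not expect a genuine obstacle here; the only points requiring care are the hypotheses behind the invocation of Chow-Rashevskii and the bookkeeping of definitions. Specifically, one should confirm that $F_1$ and $F_2$ are smooth on all of $\mathbb R^2\times(S^1)^3$ (the only potential singularity, $\cos(\theta_1-\theta_2)=3$, never occurs, so the denominators in (\ref{eq: 2legF1}) and (\ref{eq: 2legF2}) never vanish), and that the relevant notion of degree is the \emph{length} of the small growth vector rather than its terminal value. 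Granting these, the Corollary follows.
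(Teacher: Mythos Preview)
Your proposal is correct and matches the paper's own proof essentially line for line: controllability is deduced from Chow--Rashevskii together with the bracket-generating information in Theorem~\ref{thm: sing}, and the degrees of non-holonomy are read off as the lengths of the small growth vectors. Your added remarks on smoothness of $F_1,F_2$ and the distinction between length and terminal value of the growth vector are sound and slightly more explicit than the paper, but the argument is the same.
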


\begin{proof}
Controllability follows from the Chow-Rashevskii theorem, as the vector fields $F_1$ and $F_2$ Lie generate the tangent bundle at every point.  The degree of non-holonomy is simply the length of the small growth vector.
\end{proof}

\subsection{Abnormal extremals.} \label{subsec: abnormal}
Abnormal extremals are intrinsic to the dynamics; they do not depend on the cost. It is well known that they play a very important role for the optimal synthesis (\cite{book2}). They correspond to imposing $p_0=0$ in the Hamiltonian (\ref{eq: hamiltonian}). It follows that, for our application, the abnormal Hamiltonian is 
$$ H_a(q, p, u)= u_1\langle  p, F_1(q) \rangle + u_2\langle  p, F_2(q) \rangle.
$$
According to the Pontryagin maximum principle, abnormal extremals are curves $(q(t), p(t))$ satisfying Hamilton's equations for $H_a$ as well as 
\begin{align}
    \langle p, F_1(q) \rangle &=0 \label{eq: abnormal1}\\
    \langle p, F_2(q) \rangle &=0. \label{eq: abnormal2}
\end{align}
Differentiating these equations leads to the additional requirements
\begin{align}
    \langle p, F_3(q) \rangle &=0\label{eq: abnormal3} \\
    u_1\langle p, F_4(q) \rangle + u_2\langle p, F_5(q) \rangle &=0.\label{eq: abnormal4}
\end{align}
The following results partially characterize the abnormal curves.
\begin{Proposition}\label{prop: explicit abnormals}
The horizontal lifts of the singular curves in Figure \ref{fig: sing sets} are projections of abnormal curves for the two-legged copepod. They are the integral curves for the vector field $F_1+F_2$ restricted to the singular set $S=S_1\cup S_2$ for the distribution $D$, and they project to uniform circular motion in the $xy$-plane.  
More precisely, let $q_0=q^i(0)$, and take any $n \in \mathbb Z$ and any $c_1, c_2 \in \mathbb R$ not both zero.  Then the curves are $(q^1(t), p^1(t))$ and $(q^2(t), p^2(t))$ are abnormal, where
\begin{align*}q^1(t)&=
\begin{pmatrix}
-\frac{\sqrt 5}{6}\cos(\frac{2}{5}t \pm \arctan 2+ \phi_0)+c_x\\  
-\frac{\sqrt 5}{6}\sin(\frac{2}{5}t \pm \arctan 2+ \phi_0)+c_y \\
    -\frac{3}{5}t + \phi_0\\
    t \\ t \pm 2\arctan 2 -2\pi n
\end{pmatrix},
\\
p^1(t)&=
\begin{pmatrix}
c_1 \\ c_2 \\
\frac{1}{6}[(-2c_1+c_2)\cos(\frac{2}{5}t+\phi_0)-(c_1+2c_2)\sin(\frac{2}{5}t+\phi_0)]\\
\frac{1}{36}[(-2c_1+11c_2)\cos(\frac{2}{5}t+\phi_0)-(11c_1+2c_2)\sin(\frac{2}{5}t+\phi_0)]\\
\frac{1}{36}[(-10c_1-5c_2)\cos(\frac{2}{5}t+\phi_0)+(5c_1-102c_2)\sin(\frac{2}{5}t+\phi_0)]
\end{pmatrix},\\
q^2(t)&=
\begin{pmatrix}
-\frac{1}{2}\cos(\frac{2}{3}t + \phi_0)+x_0+\frac{1}{2}\cos(\phi_0) \\  
-\frac{1}{2}\sin(\frac{2}{3}t + \phi_0)+y_0+\frac{1}{2}\sin(\phi_0) \\  
    -\frac{t}{3}+\phi_0\\ t \\ t-2\pi n
\end{pmatrix},\,
p^2(t)=
\begin{pmatrix}
0 \\ 0 \\ 6 \\ 1 \\ 1
\end{pmatrix},\,
\end{align*}
and 
$$
c_x = x_0 + \frac{\sqrt 5}{6}\cos(\pm\arctan 2+ \phi_0) \qquad \text{and} \qquad c_y = y_0 + \frac{\sqrt 5}{6}\sin(\pm\arctan 2+ \phi_0).
$$
\end{Proposition}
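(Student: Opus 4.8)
The plan is to reduce the assertion that these curves are abnormal to a few verifiable facts, exploiting the characterization of abnormal extremals implicit in (\ref{eq: abnormal1})--(\ref{eq: abnormal4}). Since $H_a$ is linear in $u$ and the controls are unbounded, the maximization condition of Theorem \ref{thm: max principle} forces $\langle p, F_1\rangle = \langle p, F_2\rangle = 0$; these two relations, together with Hamilton's equations for $H_a$ and $p\not\equiv 0$, are exactly what must be checked, while (\ref{eq: abnormal3}) and (\ref{eq: abnormal4}) are automatic consequences obtained by differentiation. A key feature of the abnormal case is that the controls are \emph{not} pinned down by the maximization, so I am free to choose them. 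I would take $u_1 = u_2 = 1$, i.e. integrate the single vector field $F_1 + F_2$. This is the natural choice for staying in the singular set: along any trajectory $\frac{d}{dt}(\theta_1 - \theta_2) = u_1 - u_2$, so $u_1 = u_2$ conserves $\psi = \theta_1 - \theta_2$ and keeps the curve inside $S$, establishing at once that the horizontal lifts of the singular lines are precisely the integral curves of $F_1 + F_2$ on $S$.

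First I would integrate $\dot q = F_1(q) + F_2(q)$ on $S$ explicitly. Because $\psi$ is constant on $S$, the $\phi$-component of $F_1 + F_2$, namely $\frac{1}{6}(\cos\psi - 3)$, is constant, so $\phi(t)$ is affine; substituting $\cos\psi = 1$ on $S_2$ and $\cos\psi = -\tfrac35$ on $S_1$ (using $\cos(2\arctan 2) = -\tfrac35$) reproduces the slopes $-\tfrac13$ and $-\tfrac35$ appearing in $q^2$ and $q^1$. Then the angles $\alpha_i = \theta_i + \phi$ are affine in $t$ with a common slope ($\tfrac23$ on $S_2$, $\tfrac25$ on $S_1$), so the resistance matrix $M$ in (\ref{eq: M}) and the vector $K_1 + K_2$ reduce to trigonometric expressions of a single frequency; inverting $M$ and integrating yields sinusoidal $x(t), y(t)$, i.e. uniform circular motion, whose amplitude and phase I would match to the stated $q^1, q^2$ (radii $\tfrac{\sqrt5}{6}$ and $\tfrac12$, with $c_x, c_y$ fixing the initial position).

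The crux is the covector, and rather than guess $p^i(t)$ I would argue that the constraints propagate. Along the Hamiltonian flow of $\langle p, F_1 + F_2\rangle$, the linear-in-$p$ functions satisfy $\frac{d}{dt}\langle p, F_1\rangle = -\langle p, F_3\rangle$, $\frac{d}{dt}\langle p, F_2\rangle = \langle p, F_3\rangle$, and $\frac{d}{dt}\langle p, F_3\rangle = \langle p, F_4 + F_5\rangle$, since the Poisson bracket of $\langle p, X\rangle$ and $\langle p, Y\rangle$ equals $\langle p, [X,Y]\rangle$. On $S_2$ we have $F_3 = 0$ by Theorem \ref{thm: sing}, so $\langle p, F_1\rangle$ and $\langle p, F_2\rangle$ are conserved; on $S_1$ we have $F_4 = -F_5$, so the triple $\langle p, F_1\rangle, \langle p, F_2\rangle, \langle p, F_3\rangle$ forms a closed, conserved system. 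Hence it suffices to choose $p^i(0)$ in the annihilator of $\{F_1, F_2\}$ (and of $F_3$ on $S_1$) at $q^i(0)$ and solve the linear adjoint equation $\dot p = -(DF_1 + DF_2)^T p$; the constraints (\ref{eq: abnormal1})--(\ref{eq: abnormal3}) then hold for all $t$, and (\ref{eq: abnormal4}) holds because $u_1 = u_2$ and $F_4 = -F_5$. The annihilator is two-dimensional on $S_1$, producing the free parameters $c_1, c_2$ in $p^1$, while the stated constant $p^2$ is the corresponding solution on $S_2$; the explicit closed forms are then recovered by solving this linear system and confirmed by direct substitution.

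The main obstacle is computational rather than conceptual: simplifying $M^{-1}(K_1 + K_2)$ on $S$ enough to integrate the position equations in closed form, and verifying that the displayed $p^i(t)$ solve the adjoint equation, which requires the Jacobians of the unwieldy fields (\ref{eq: 2legF1})--(\ref{eq: 2legF2}). The bracket-invariance argument above is what keeps the proof clean: it certifies abnormality without ever needing the explicit $p^i$, so the closed forms reduce to a mechanical check, best carried out with the same computer-algebra tools used for Theorem \ref{thm: sing}.
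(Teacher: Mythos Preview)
Your proposal is correct and follows essentially the same line as the paper's proof: both identify the curves as integral curves of $F_1+F_2$ restricted to $S$ (with $u_1=u_2$ chosen to preserve $\psi$), integrate the state explicitly using the constancy of $\psi$, and dispose of the abnormal conditions (\ref{eq: abnormal3})--(\ref{eq: abnormal4}) by invoking $F_3|_{S_2}=0$ and $F_4=-F_5$ on $S$ from Theorem~\ref{thm: sing}. Your Poisson-bracket propagation argument makes the invariance of (\ref{eq: abnormal1})--(\ref{eq: abnormal3}) along the flow of $F_1+F_2$ transparent and explains the two-parameter freedom in $p^1$, whereas the paper simply declares the verification of Hamilton's equations and of (\ref{eq: abnormal1})--(\ref{eq: abnormal4}) to be straightforward and leaves it to direct computation.
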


\begin{proof}
Note that $q^i(t)$ is just the horizontal lift of the naive parametrization of the lines which constitute the connected components of $S_i$ projected to the $\theta_1 \theta_2$-plane; these are the colored lines in Figure \ref{fig: sing sets}.  Thus for any time $t$ we have $q^i(t) \in S_i$.  The curves $q^i$ are integral curves for $F_1 +F_2$ and therefore horizontal.  In fact, $\text{span}\{F_1 +F_2\}$ is the intersection of $D$ and the set of Cauchy characteristics for $D+[D,D]$.  It is also the intersection of $D$ with the tangent bundle $TS$.



It is straightforward to check that the pairs $(q^1(t), p^1(t))$ and $(q^2(t), p^2(t))$ satisfy Hamilton's equations.  Observe that $p^i$ is constant in the first two components since our control vector fields do not depend on $x$ or $y$.  We also have by construction that 
$$\dot q^i = F_1 + F_2 |_{q^i} = \frac{\partial H_a}{\partial p}\bigg |_{(q^i,p^i)}.$$
Interestingly, the abnormal Hamiltonian also satisfies
$$\frac{\partial H_a}{\partial q}\bigg |_{(q^2,p^2)}=0.$$
Note that $p^1$ is actually a 2-parameter family of curves, and $p^2$ is constant and therefore an integral of motion.

It is similarly straightforward to check that $(q^i(t), p^i(t))$ satisfy  abnormal equations (\ref{eq: abnormal1} -- \ref{eq: abnormal4}).  Recall that $F_3|_{S_2}=0$ so equation (\ref{eq: abnormal3}) is satisfied for any $p$ on $S_2$.  Similarly, on any continuous curve within $S$ we have $F_4=-F_5$ and $u_1=u_2$, so equation (\ref{eq: abnormal4}) is satisfied for any $p$.
\end{proof}

\begin{Corollary}
Abnormal strokes contained in the singular set for $D$ can produce neither rotation nor displacement.
\end{Corollary}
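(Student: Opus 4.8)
The plan is to read the conclusion directly off the explicit parametrizations $q^1, q^2$ furnished by Proposition~\ref{prop: explicit abnormals}, rather than integrating anything new. By that proposition, every abnormal curve lying in $S$ is an integral curve of $F_1 + F_2$, along which $\dot\theta_1 = \dot\theta_2 = 1$; in particular the leg angles advance uniformly, and since $D\cap TS = \operatorname{span}\{F_1+F_2\}$ is one-dimensional there is no remaining freedom in the motion. A stroke requires $\theta_1,\theta_2$ to be periodic, which on $S$ forces the elapsed time to be an integer multiple of $2\pi$. Thus an abnormal stroke contained in $S$ is exactly a closed orbit of $F_1+F_2$, and I would extract the net change in $(x,y,\phi)$ from the closed-form solutions.

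First I would isolate the spatial projection. In both $q^1$ and $q^2$ the coordinates $x(t)$ and $y(t)$ have the form $r\cos(\omega t+\delta)+\text{const}$ and $r\sin(\omega t+\delta)+\text{const}$, i.e.\ uniform circular motion: radius $r=\tfrac{\sqrt5}{6}$ and angular speed $\omega=\tfrac25$ on $S_1$, and $r=\tfrac12$, $\omega=\tfrac23$ on $S_2$. Hence the net displacement over $[0,T]$ vanishes precisely when $\omega T\in 2\pi\mathbb Z$, that is, over whole numbers of spatial circles. For the orientation, $\phi(t)$ is linear: $\phi(t)=-\tfrac35 t+\phi_0$ on $S_1$ and $\phi(t)=-\tfrac13 t+\phi_0$ on $S_2$, so as an element of $S^1$ the orientation returns to $\phi_0$ exactly when the total change lies in $2\pi\mathbb Z$, i.e.\ over whole numbers of orientation turns.

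The one substantive step, which I expect to be the main obstacle, is the bookkeeping that ties these three separate periods together: the leg-angle period $2\pi$, the spatial-circle period ($5\pi$ on $S_1$, $3\pi$ on $S_2$), and the orientation period ($\tfrac{10\pi}{3}$ on $S_1$, $6\pi$ on $S_2$). I would verify that these are commensurable, so that a genuine closed abnormal stroke exists, and that at the first common period every one of $x,y,\phi$ simultaneously returns to its initial value. On $S_1$ this first occurs at $T=10\pi$ (five leg revolutions, two spatial circles, three orientation turns) and on $S_2$ at $T=6\pi$ (three leg revolutions, two spatial circles, one orientation turn). Because the trajectory is then a closed loop in $\mathbb R^2\times(S^1)^3$, the net change in $x$, $y$, and $\phi$ is zero, which is precisely the claim. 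One could alternatively observe that, under the leg-ordering constraint (\ref{eq: constraint 2 legs}) confining each singular line to the box, a closed horizontal loop on the one-dimensional $D\cap TS$ must be reciprocal and hence retraces its path, giving net zero trivially; but the commensurability computation is what yields the result in the unconstrained setting $\mathbb R^2\times(S^1)^3$ where these circular orbits genuinely close.
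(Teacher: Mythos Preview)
Your commensurability argument has a genuine gap: you conflate ``stroke'' with ``closed orbit in the full configuration space.'' A stroke only requires the leg angles $\theta_1,\theta_2$ to return to their initial values; it imposes nothing on $x,y,\phi$. So in the unconstrained $S^1$ setting, every $T\in 2\pi\mathbb Z$ gives a stroke, not just the common periods $T=10\pi$ (on $S_1$) or $T=6\pi$ (on $S_2$) that you isolate. Your computation shows only that those particular strokes close up in the full space---which is tautological, since you selected $T$ precisely to make that happen. For the shortest stroke $T=2\pi$ on $S_1$, your own formulas give $\Delta\phi=-\tfrac{6\pi}{5}\not\equiv 0\pmod{2\pi}$ and $\omega T=\tfrac{4\pi}{5}\not\in 2\pi\mathbb Z$, so there \emph{is} net rotation and displacement. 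In other words, the corollary is actually false in the unconstrained $(S^1)^3$ setting you claim to be handling; commensurability cannot rescue it.

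The argument that works is the one you relegate to an aside: with $\theta_i$ real-valued (or confined to the box (\ref{eq: constraint 2 legs})), any horizontal curve in $S$ has $\dot\theta_1=\dot\theta_2$, so a stroke forces $\int_0^T\dot\theta_1\,dt=0$ and the legs trace a segment of one singular line forward and then backward. Since $(x,y,\phi)$ are functions of $\theta_1$ along the explicit curves $q^i$, returning $\theta_1$ to its initial value returns everything. This reciprocal-motion argument is exactly the paper's proof; it is not an optional alternative but the only route that proves the statement as intended.
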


\begin{proof}
The only possible abnormal stroke lying in $S$ would require the legs tracing a segment of one of the colored lines in Figure \ref{fig: sing sets} first forward and then backward.  The symmetry of such a stroke prevents any net rotation or displacement.
\end{proof}


\begin{Proposition}\label{prop: vert horiz not abnormal}
Neither control can be zero along an abnormal extremal. 
\end{Proposition}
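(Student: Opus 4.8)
The plan is to argue by contradiction, using the fact that the abnormal relations (\ref{eq: abnormal1})--(\ref{eq: abnormal4}) propagate under differentiation along the extremal and eventually over-determine the costate. First I would reduce to a single case using the $\theta_1 \leftrightarrow \theta_2$ symmetry noted after (\ref{eq: 2legF2}): since swapping $\theta_1$ and $\theta_2$ interchanges $F_1$ and $F_2$ (and correspondingly exchanges the two controls $u_1=\dot\theta_1$ and $u_2=\dot\theta_2$), it carries a $u_1\equiv 0$ extremal to a $u_2\equiv 0$ extremal. Hence it suffices to rule out $u_1\equiv 0$. So suppose $u_1\equiv 0$ on a subinterval; because an abnormal extremal is nonconstant we have $u_2\not\equiv 0$, and I would work on a subinterval where $u_2$ does not vanish.

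Next I would record the evolution of the momentum functions. Hamilton's equations for $H_a$ give the standard identity $\frac{d}{dt}\langle p, F\rangle = u_1\langle p,[F_1,F]\rangle + u_2\langle p,[F_2,F]\rangle$ for any field $F$ (this is exactly what produces (\ref{eq: abnormal3}) and (\ref{eq: abnormal4}) in the text). Feeding $u_1\equiv 0$ into (\ref{eq: abnormal4}) collapses it to $u_2\langle p, F_5\rangle=0$, so $\langle p, F_5\rangle = 0$. Differentiating this relation once more and again setting $u_1\equiv 0$ yields $u_2\langle p, F_9\rangle = 0$, whence $\langle p, F_9\rangle = 0$, where $F_9=[F_2,F_5]$ is the iterated bracket introduced in the proof of Theorem \ref{thm: sing}. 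Combined with the standing abnormal conditions (\ref{eq: abnormal1})--(\ref{eq: abnormal3}), the costate $p$ now annihilates all five fields $F_1, F_2, F_3, F_5, F_9$.

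The key step is then a linear-algebra computation: I would check symbolically (as with the other determinant computations in the paper) that $F_1, F_2, F_3, F_5, F_9$ are linearly independent away from the singular set $S=S_1\cup S_2$. Equivalently, at a regular point, where $F_1,\dots,F_5$ form a basis of $\mathbb R^5$, one verifies that the $F_4$-component of $F_9=[F_2,F_5]$ is nonzero, so that $\{F_1,F_2,F_3,F_5,F_9\}$ is again a basis. Granting this, $p$ annihilating five independent fields forces $p=0$ at any regular point, contradicting the nontriviality $(p_0^*,p^*(t))\neq(0,0)$, which for an abnormal ($p_0=0$) means $p^*(t)\neq 0$ for all $t$. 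It remains only to exhibit a regular point on the trajectory: since $u_1\equiv 0$ freezes $\theta_1$ while $\theta_2$ varies, the quantity $\psi=\theta_1-\theta_2$ sweeps a continuum of values and cannot remain pinned to the discrete bad values $2\pi n$ or $2\pi n \pm 2\arctan 2$ cutting out $S$; hence the trajectory visits the regular set and the contradiction triggers. The symmetric case $u_2\equiv 0$ is handled identically, now with $\{F_1,F_2,F_3,F_4,F_6\}$ (the very fields already shown independent on $S_1$ in the proof of Theorem \ref{thm: sing}), and indeed reduces to the previous computation under the symmetry, which sends $F_4\mapsto -F_5$ and $F_6\mapsto -F_9$.

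The main obstacle I anticipate is precisely this independence computation. The bracket $F_9=[F_2,F_5]$ (and its mirror $F_6$) is a third-order iterated Lie bracket of the already cumbersome fields (\ref{eq: 2legF1})--(\ref{eq: 2legF2}), so establishing that $\det[F_1,F_2,F_3,F_5,F_9]$ is not identically zero, and confirming that its zero locus is thin enough (ideally, no larger than $S$), is the genuine computational heart of the argument and is best delegated to a computer algebra system. A minor secondary point is the bookkeeping ensuring the trajectory genuinely meets a regular point rather than being trapped in the dependence locus, which the variation of $\psi$ together with the explicit form of the determinant should resolve.
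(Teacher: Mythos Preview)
Your proposal is correct and takes essentially the same approach as the paper, which (working the symmetric case $u_2\equiv 0$) differentiates the abnormal relations to obtain $\langle p,F_4\rangle=\langle p,F_6\rangle=0$ and then forces $p=0$ by an independence count. The one simplification you miss is that $F_1,F_3,F_4,F_6$ all have vanishing fifth component, so their linear independence (a $4\times 4$ check along the curve) already forces $p=(0,0,0,0,p_5)$, after which $\langle p,F_2\rangle=p_5=0$ finishes; this sidesteps your full $5\times 5$ determinant and the attendant worry about its zero locus.
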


\begin{proof}
Without loss of generality, assume $u_2=0$ along an abnormal extremal $(q(t), p(t))$, so $\theta_2$ is constant. Assume $u_1\neq0$; otherwise the system is at a stationary point.
Equation (\ref{eq: abnormal4}) then implies that $\langle p,F_4(q)\rangle=0$. Differentiating this equation yields 
\begin{equation}
    u_1\langle p, F_6(q) \rangle + u_2\langle p, F_7(q) \rangle =0,
\end{equation}
which reduces to $\langle p, F_6(q) \rangle$ since $u_2=0$. A straightforward calculation shows that the vector fields $F_1,F_3,F_4,F_6$ are linearly independent along such curve. Moreover, they all are identically zero in the fifth component, so the only $p$ mutually orthogonal to these four vector fields is of the form $(0, 0, 0, 0, p_5)$.
Then using (\ref{eq: 2legF2}) and $\langle p,F_2\rangle=0$ we also have $p_5=0$.  Thus $p=0$, which contradicts the maximum principle. 
\end{proof}

\begin{Corollary} 
The motion in Example \ref{ex: triangle} is not abnormal.  The boundary box (\ref{eq: constraint 2 legs}) is not abnormal.  
\end{Corollary}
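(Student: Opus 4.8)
The plan is to obtain both claims as immediate consequences of Proposition \ref{prop: vert horiz not abnormal}, which asserts that neither control $u_i = \dot\theta_i$ can vanish along an abnormal extremal. The key observation is that both the triangle motion and the boundary box contain arcs on which one of the two controls is identically zero, and Proposition \ref{prop: vert horiz not abnormal} forbids any such arc from being abnormal.

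First, for Example \ref{ex: triangle}, I would simply inspect the piecewise formulas for $\theta_1(t)$ and $\theta_2(t)$. On the middle leg $\pi \le t < \frac{3\pi}{2}$ we have $\theta_2 \equiv 2\pi$, hence $u_2 = \dot\theta_2 = 0$; on the final leg $\frac{3\pi}{2} \le t < 2\pi$ we have $\theta_1 \equiv 0$, hence $u_1 = \dot\theta_1 = 0$. Since a control vanishes on a subinterval of positive length, Proposition \ref{prop: vert horiz not abnormal}—whose proof uses only the vanishing of a control together with the differentiated abnormal equations—shows that the corresponding arc, and therefore the motion as a whole, cannot be an abnormal extremal.

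Second, for the boundary box (\ref{eq: constraint 2 legs}), I would note that the box is a closed rectangular stroke in the $\theta_1\theta_2$-plane whose four edges are each characterized by one coordinate being held fixed: on the two vertical edges $\theta_1 \in \{0, \pi\}$ is constant, so $u_1 = 0$, while on the two horizontal edges $\theta_2 \in \{\pi, 2\pi\}$ is constant, so $u_2 = 0$. On every edge a control vanishes, so the same application of Proposition \ref{prop: vert horiz not abnormal} shows that no edge, and hence the box traversed as a whole, can be abnormal.

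There is no genuine obstacle here; the statement is a direct corollary. The only point warranting care is the precise meaning of ``not abnormal'': it suffices to exhibit a single subarc on which a control vanishes, because an abnormal extremal must satisfy the abnormal equations (\ref{eq: abnormal1})--(\ref{eq: abnormal4}) at every time, and Proposition \ref{prop: vert horiz not abnormal} rules this out wherever a control is zero. Thus the existence of even one such arc is enough to disqualify the entire trajectory.
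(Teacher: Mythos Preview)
Your proposal is correct and matches the paper's approach: the corollary is stated without proof immediately after Proposition \ref{prop: vert horiz not abnormal}, precisely because both the triangle motion and the boundary box contain arcs along which one control vanishes, which that proposition forbids. Your identification of the specific subintervals where $u_1=0$ or $u_2=0$ is accurate and is all that is needed.
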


\begin{Proposition}
The abnormal curves not lying in $S$ are solutions to the Hamiltonian system
$$ \dot q = \frac{\partial \tilde H_a}{\partial p},  \qquad  \dot p = -\frac{\partial \tilde H_a}{\partial q}
$$
for the Hamiltonian 
$$\tilde H_a(q,p) = \langle p, F_1 \rangle\langle p, F_5 \rangle - \langle p, F_2 \rangle\langle p, F_4 \rangle.
$$
\end{Proposition}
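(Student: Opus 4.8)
The plan is to eliminate the undetermined controls $u_1,u_2$ from the abnormal system and recognize what remains as the Hamiltonian flow of $\tilde H_a$. Throughout I write $h_i=\langle p,F_i\rangle$ for the Hamiltonian lift of $F_i$, so that the abnormal conditions (\ref{eq: abnormal1})--(\ref{eq: abnormal4}) become $h_1=h_2=h_3=0$ together with $u_1h_4+u_2h_5=0$, and I will use repeatedly the elementary identity $\partial h_i/\partial p=F_i$.

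First I would pin down the control direction away from $S$. Off the singular set Theorem \ref{thm: sing} gives growth vector $(2,3,5)$, so $F_1,\dots,F_5$ are linearly independent there; consequently $h_4$ and $h_5$ cannot both vanish, for otherwise $p$ would annihilate all five $F_i$ and we would have $p=0$, contradicting the maximum principle. Equation (\ref{eq: abnormal4}) then forces $(u_1,u_2)$ to be a nonzero multiple of $(h_5,-h_4)$; that is, $u_1=\lambda h_5$ and $u_2=-\lambda h_4$ for some scalar function $\lambda(t)\neq 0$.

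Next I would substitute this control into the abnormal dynamics and compare against the Hamiltonian vector field of $\tilde H_a$. Differentiating $\tilde H_a=h_1h_5-h_2h_4$, using $\partial h_i/\partial p=F_i$, and invoking $h_1=h_2=0$ along the extremal (which kills the stray terms $h_1F_5$, $h_2F_4$, $h_1\,\partial_q h_5$, $h_2\,\partial_q h_4$) yields
$$\frac{\partial \tilde H_a}{\partial p}=h_5F_1-h_4F_2,\qquad \frac{\partial \tilde H_a}{\partial q}=h_5\frac{\partial h_1}{\partial q}-h_4\frac{\partial h_2}{\partial q}.$$
On the other hand, the abnormal dynamics with the control above, together with the adjoint equation $\dot p=-\partial H_a/\partial q=-u_1\,\partial_q h_1-u_2\,\partial_q h_2$ (the controls held fixed, as in the maximum principle), give
$$\dot q=\lambda\left(h_5F_1-h_4F_2\right),\qquad \dot p=-\lambda\left(h_5\frac{\partial h_1}{\partial q}-h_4\frac{\partial h_2}{\partial q}\right).$$
Hence $\dot q=\lambda\,\partial_p\tilde H_a$ and $\dot p=-\lambda\,\partial_q\tilde H_a$: the same scalar $\lambda$ multiplies both equations, and as a consistency check $\tilde H_a=h_1h_5-h_2h_4=0$ along the curve.

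Finally I would remove $\lambda$ by a time reparametrization. Since abnormal extremals are cost-independent, they are determined only up to reparametrization, so introducing a new parameter $s$ with $ds/dt=\lambda$ turns $(q(t),p(t))$ into a genuine solution of $dq/ds=\partial_p\tilde H_a$, $dp/ds=-\partial_q\tilde H_a$, which is the assertion. I expect the main obstacle to be bookkeeping rather than conceptual: one must take the adjoint equation with the controls frozen so that $\partial_q H_a=u_1\partial_q h_1+u_2\partial_q h_2$, verify that no surviving terms of $\partial_q\tilde H_a$ involve derivatives of $h_4,h_5$ (again a consequence of $h_1=h_2=0$), and confirm that the identical factor $\lambda$ appears in $\dot q$ and $\dot p$ so that a single reparametrization suffices for both.
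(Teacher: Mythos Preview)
Your proof is correct and follows essentially the same route as the paper: use the linear independence of $F_1,\dots,F_5$ off $S$ to conclude $(h_4,h_5)\neq(0,0)$, solve the relation $u_1h_4+u_2h_5=0$ for the control, and thereby eliminate $u$ from the abnormal system. The paper phrases the last step as ``scale $H_a$'' to obtain $\tilde H_a$, whereas you compute $\partial_p\tilde H_a$ and $\partial_q\tilde H_a$ explicitly on the constraint surface $h_1=h_2=0$, match them against the abnormal dynamics up to the common factor $\lambda$, and then reparametrize to remove $\lambda$. Your treatment is in fact more careful than the paper's: multiplying a Hamiltonian by a state-dependent factor does not in general preserve its flow, so the paper's ``scaling'' step really does require the verification you supply (that the stray terms $h_1\partial_q h_5$, $h_2\partial_q h_4$, etc., vanish on the abnormal locus, so the same $\lambda$ appears in both $\dot q$ and $\dot p$).
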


\begin{proof}
Note that Proposition \ref{prop: explicit abnormals} considers the abnormal curves within $S$.
Assume $(q(t),p(t))$ is an abnormal curve not lying within $S$ for any time interval.  By equations  (\ref{eq: abnormal1} -- \ref{eq: abnormal3}), we have that $p$ is orthogonal to $F_1(q), F_2(q), F_3(q)$.  But on the complement of $S$, we have that $F_1, F_2, F_3, F_4, F_5$ are linearly independent, so $p$ cannot also be orthogonal to both $F_4$ and $F_5$.  Without loss of generality, assume $p$ is not orthogonal to $F_4$.  Then we can solve equation (\ref{eq: abnormal4}) for 
$$ u_1 = -u_2 \frac{\langle p, F_5 \rangle}{\langle p, F_4 \rangle}
$$ obtaining 
$$ H_a = -u_2 \frac{\langle p, F_5 \rangle}{\langle p, F_4 \rangle}\langle p, F_1 \rangle + u_2 \langle p, F_2 \rangle.
$$
Thus we scale $H_a$ to obtain a new Hamiltonian in which the controls do not appear at all:

$$\tilde H_a(q,p) = \langle p, F_1 \rangle\langle p, F_5 \rangle - \langle p, F_2 \rangle\langle p, F_4 \rangle.
$$
As the abnormal equations have been satisfied by construction, any solution to Hamilton's equations for this Hamiltonian will indeed be an abnormal curve.
\end{proof}


\subsection{Normal Extremals}  \label{subsec: normal}


Taking $p_0=-\frac{1}{2}$, our normal Hamiltonian is 
$$ H_n(q, p, u)= u_1\langle  p, F_1(q) \rangle + u_2\langle  p, F_2(q) \rangle -\frac{1}{2}(u_1^2 +u_2^2).
$$
We analyze the normal extremals indirectly, using the optimal control software \texttt{Bocop} (see Section \ref{sec: methods} and \cite{bocop1}).   Our investigations have led to two interesting observations regarding the behavior of normal extremals.

\begin{figure}
    \subfigure[]{\includegraphics[width=7 cm]{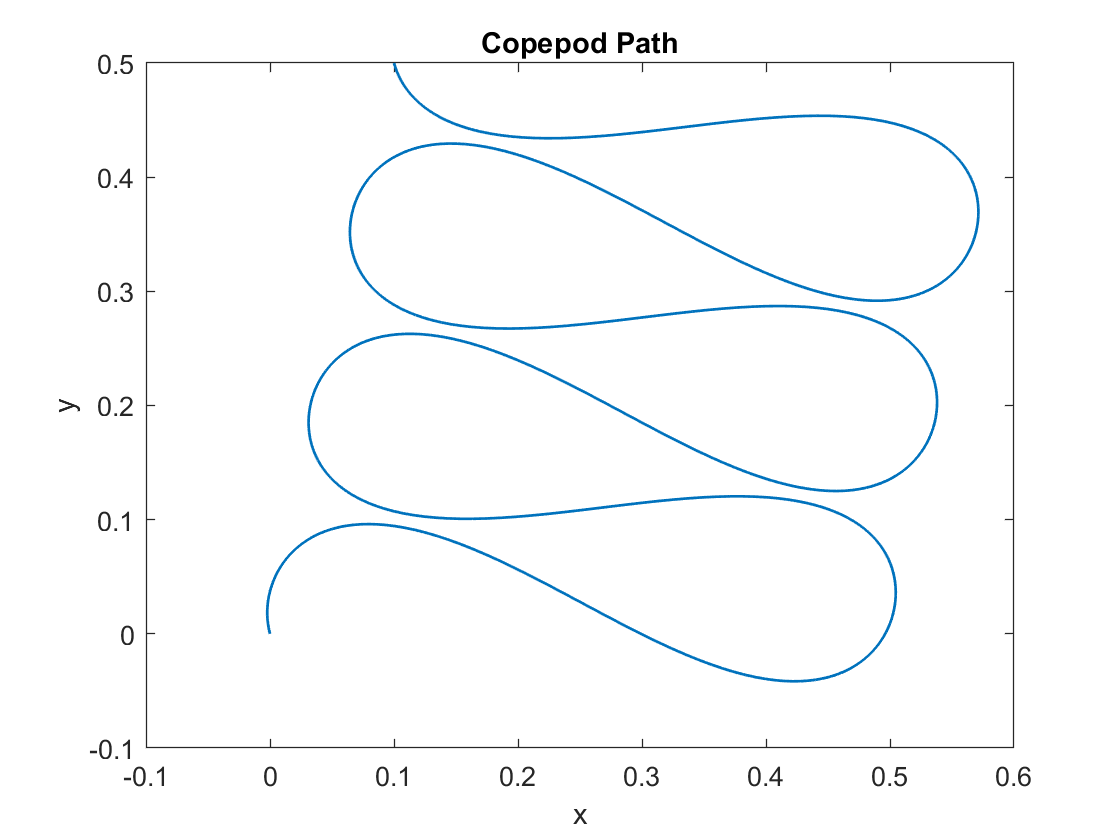}} 
    \subfigure[]{\includegraphics[width=7 cm]{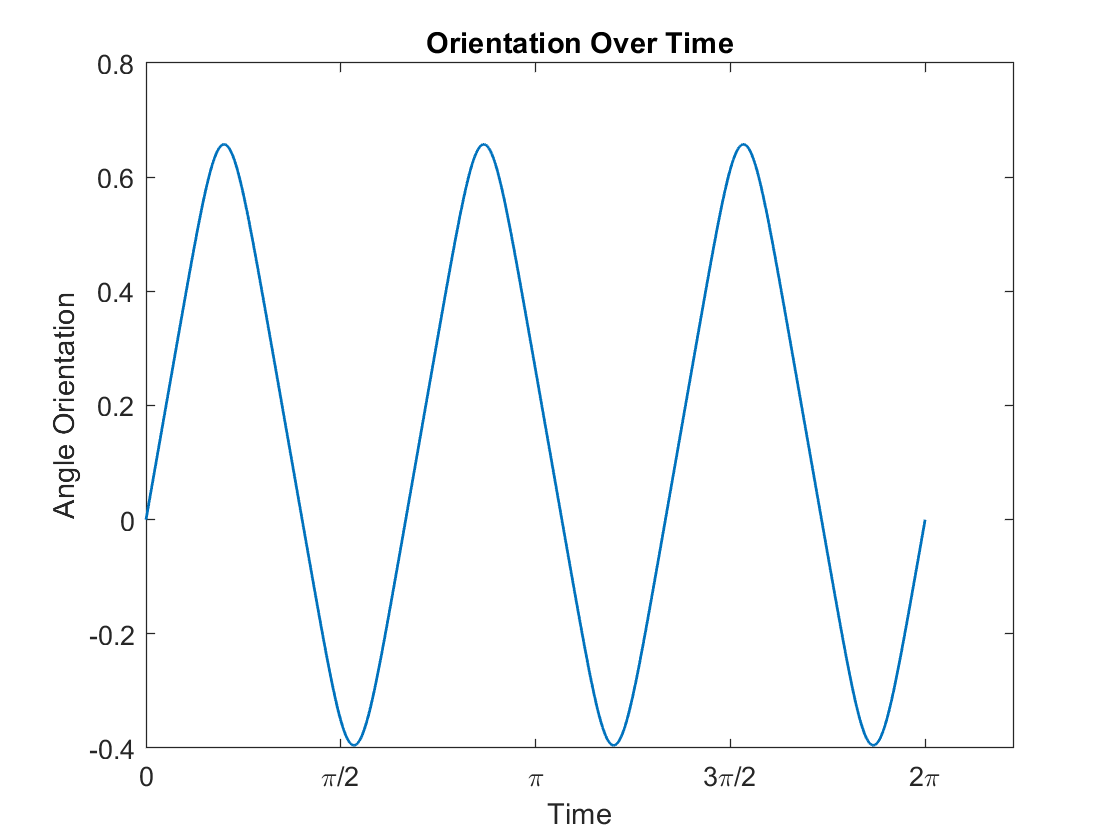}}
    \subfigure[]{\includegraphics[width=7 cm]{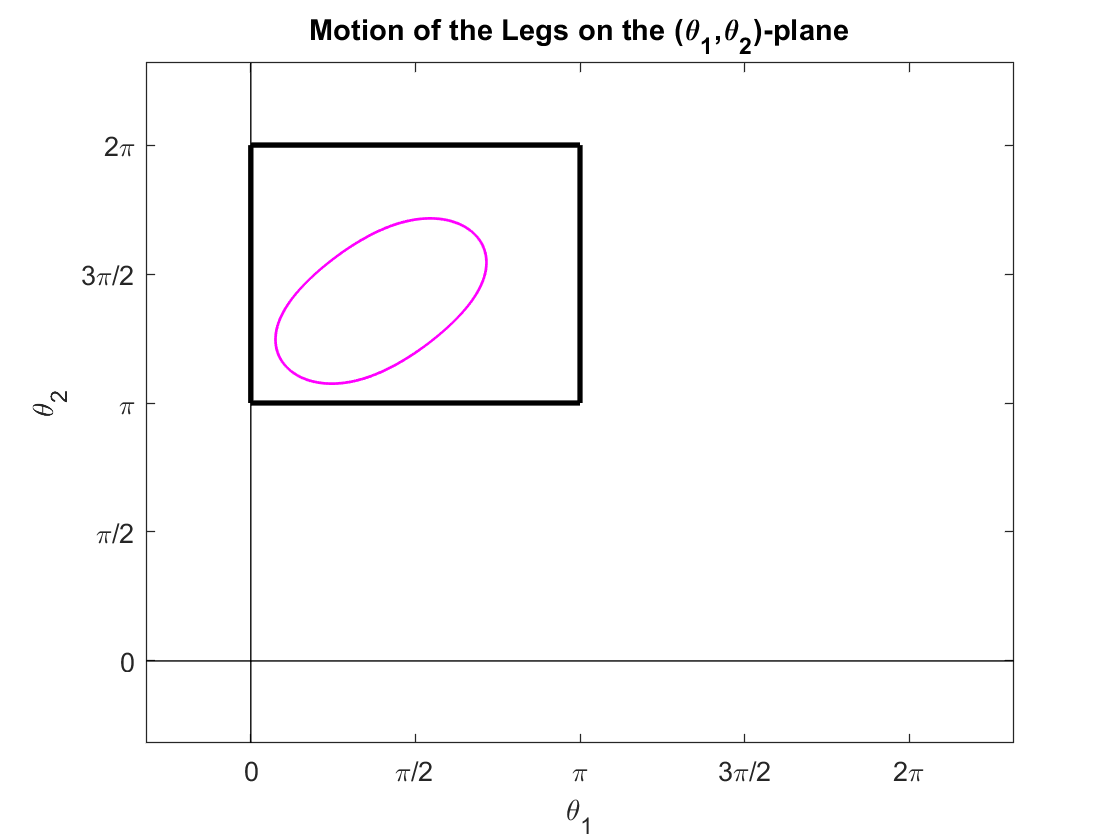}} 
    \subfigure[]{\includegraphics[width=7 cm]{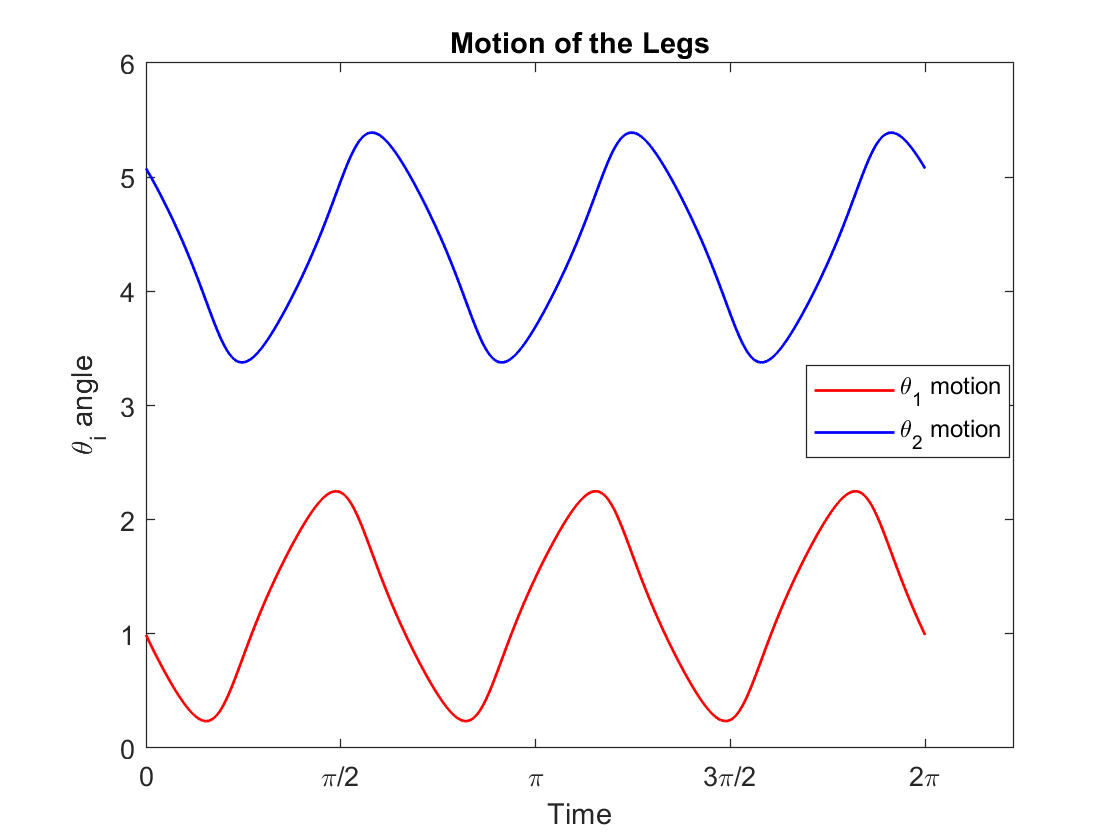}}
    \caption{A simulated optimal normal trajectory showing elastica-like motion in the plane.  Here we set the following boundary conditions: $(x,y)(0)=(0,0)$,  $(x, y)(2\pi)=(0.1,0.5)$, and  $(\phi, \theta_1, \theta_2)(0)=(\phi, \theta_1, \theta_2)(2\pi)$.  The energy here is approximately 54.311.
      (\textbf{a}) The elastica-like path of the microswimmer in the $xy$-plane.  (\textbf{b}) Orientation $\phi(t)$.  (\textbf{c}) The trajectory in the $\theta_1 \theta_2$-plane, within the constraint square (\ref{eq: constraint 2 legs}). (\textbf{d}) Angles of the legs $\theta_1(t)$ and $\theta_2(t)$.  
      \label{fig: elastica}}
\end{figure}

First, certain trajectories seem to show the copepod moving along a curve in the $xy$-plane which is a type of Euler elastica.  See Figure \ref{fig: elastica} for one example.  In particular,  we observe this behavior when fixing the start and end positions in the plane, demanding that the net change in orientation is zero, and demanding that the copepod completes a stroke, with no other imposed boundary conditions.
It can be observed that the legs follow a periodic motion, and in turn the orientation of the copepod is periodic as well. The motion in the angular phase plane $(\theta_1,\theta_2)$ is a perfect ellipsoid within the constraint space, reflecting the symmetry of the motion of the two legs.


A possible route to proving that this phenomena holds is suggested by \cite{elastica}.  Our optimal control problem can be translated into a geodesic problem in sub-Riemannian geometry.  Our control vector fields $F_1$ and $F_2$ have dual momenta $P_1=\langle p,F_1\rangle$ and $P_2=\langle p,F_2\rangle$, and the sub-Riemannian Hamiltonian $H_{sR}=\frac{1}{2}(P_1^2+P_2^2)$ generates normal geodesics corresponding to our normal optimal copepod trajectories.  These geodesics parametrized by arc length correspond to solutions of Hamilton's equations for $H_{sR}$ (geodesic equations) with energy $H = 1/2$.  These equations could potentially allow us to show that the curvature $\kappa$ of the projection $(x(t), y(t))$ satisfies one of the defining differential equations for Euler elastica.  The obstacles here are that the computations are unwieldy, and it is not clear how to impose the boundary conditions which seem to lead to elastica-like behavior in the copepod.

Our second interesting observation concerns the triangle $\mathcal T$, appearing in the lower right corner of the constraint square (\ref{eq: constraint 2 legs}), consisting of the boundary of the set $\{ (\theta_1, \theta_2) :\ 0 \leq \theta_1 \leq \theta_2 - \pi \leq \pi \}$.   Our simulations show that following this triangle is optimal for a copepod desiring to rotate a prescribed amount.  More precisely,  suppose we specify the net rotation $\Delta \phi$ but impose no other boundary conditions: we do not specify the start or end points in the plane, or require the motion be a stroke.  Then the optimal motion of the legs traces out the triangle $\mathcal T$ from the top right corner counterclockwise; it may go around $\mathcal T$ more than once,  not necessarily an integer number of times.  In fact, it will never go around an integer number of times (which would constitute a stroke).  Our observations suggest the following characterization of the motion:
$$
\Delta \phi \in \begin{cases}
[0, \frac{2\pi}{3}] \qquad  \text{just follow hypotenuse: 0 to .5 times around}\ \mathcal{T} \\
(\frac{2\pi}{3}, \frac{5\pi}{6}] \quad \, \text{once around $\mathcal{T}$,  then hypotenuse: 1 to 1.5 times around}\ \mathcal{T}\\
(\frac{5\pi}{6}, \pi] \quad  \ \  \text{twice around $\mathcal{T}$, then hypotenuse: 2 to 2.5 times around}\ \mathcal{T}.\\
\end{cases}
$$
In any of these cases, the hypotenuse need not be traced out completely.  For example, to rotate $\pi/3$ radians one would simply traverse half the hypotenuse then stop.  
See Figure \ref{fig: triangle 2.5} for an example of the third case with $\Delta \phi = \pi$.
Note the symmetry of the legs in Figure \ref{fig: triangle 2.5}(d), which is implicit in the triangle $\mathcal T$ itself.
Of course, to rotate more than $\pi$ radians one simply reverses this process (starting at the bottom left and following $\mathcal T$ clockwise).   

\begin{figure}
    \subfigure[]{\includegraphics[width=7 cm]{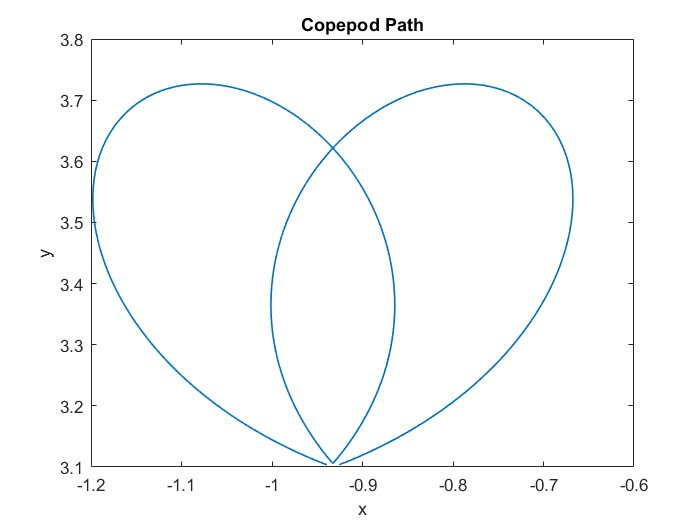}} 
    \subfigure[]{\includegraphics[width=7 cm]{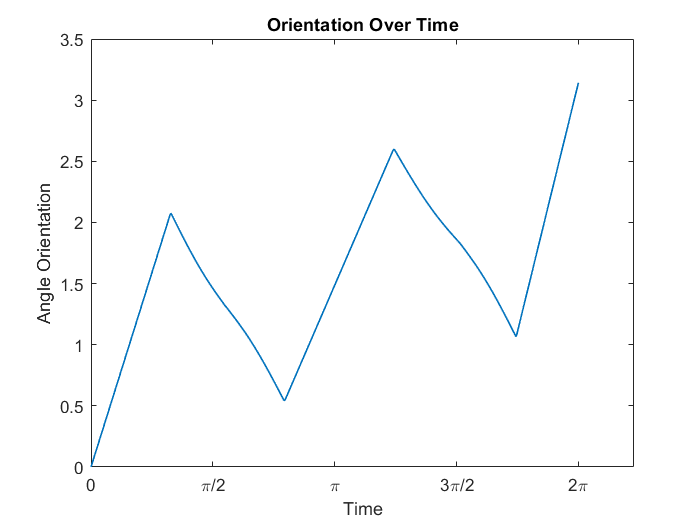}}
    \subfigure[]{\includegraphics[width=7 cm]{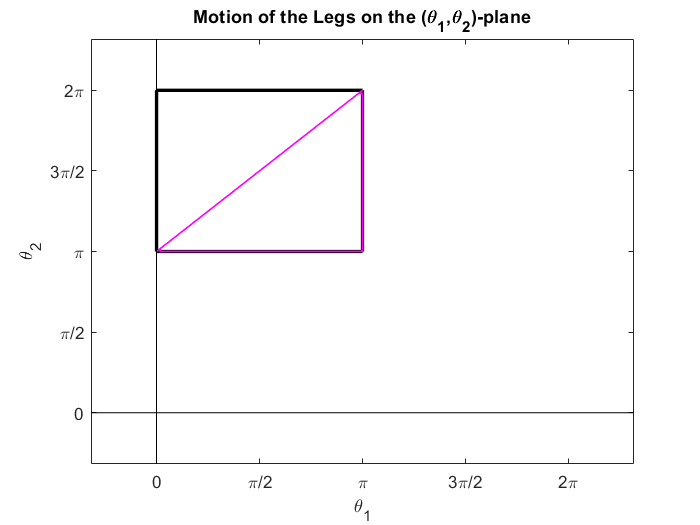}} 
    \subfigure[]{\includegraphics[width=7 cm]{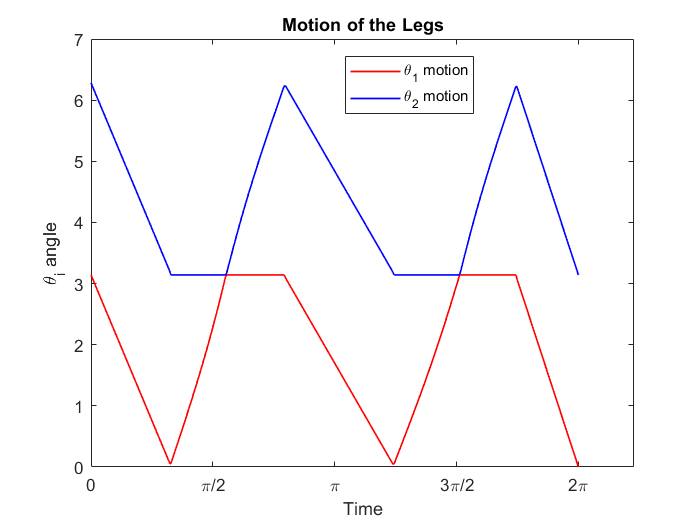}}
    \caption{A simulated normal trajectory showing that following the triangle $\mathcal T$ is optimal for creating rotation.   Our only boundary conditions concern the orientation angle:  $\phi(0)=0$ and $\phi(2\pi)=\pi$. The energy here is approximately 107.735.
      (\textbf{a}) The path of the animal in the $xy$-plane.  (\textbf{b}) Orientation $\phi(t)$.  (\textbf{c}) The trajectory in the $\theta_1 \theta_2$-plane, within the constraint square (\ref{eq: constraint 2 legs}).  The path traces out the triangle $\mathcal T$ exactly 2.5 times counterclockwise starting from the upper right vertex. (\textbf{d}) Angles of the legs $\theta_1(t)$ and $\theta_2(t)$. 
      \label{fig: triangle 2.5}}
\end{figure}

Note that traveling along the hypotenuse induces no displacement and traversing the complete triangle induces very small net displacement (see Example \ref{ex: triangle}).  Thus these motions represent optimal swimming for a copepod attempting to rotate any amount without much net displacement.
Any rotation amount less than or equal to $2\pi/3$ can be achieved optimally with no displacement at all.  Intuitively, this demonstrates the fact that traveling along the hypotenuse gives the strongest possible power stroke for inducing rotation -- the legs of the triangle are simply necessary to move the copepod legs back into position for another power stroke in a way that minimizes backwards rotation. A motion which includes the legs of the triangle, as in Figure \ref{fig: triangle 2.5}, does require the copepod to move around in the plane, but it returns to nearly its original position.  

In Figure \ref{fig: gallery} we provide a catalog of the type of topological curves in the $xy$-plane obtained when varying the boundary conditions.   The boundary conditions themselves appear in Table \ref{table}.

\begin{figure}
    \subfigure[]{\includegraphics[width=4.69 cm]{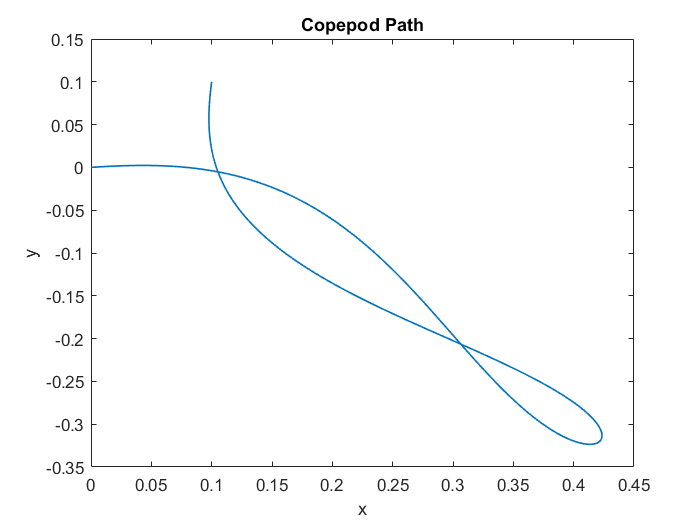}} 
        \subfigure[]{\includegraphics[width=4.69 cm]{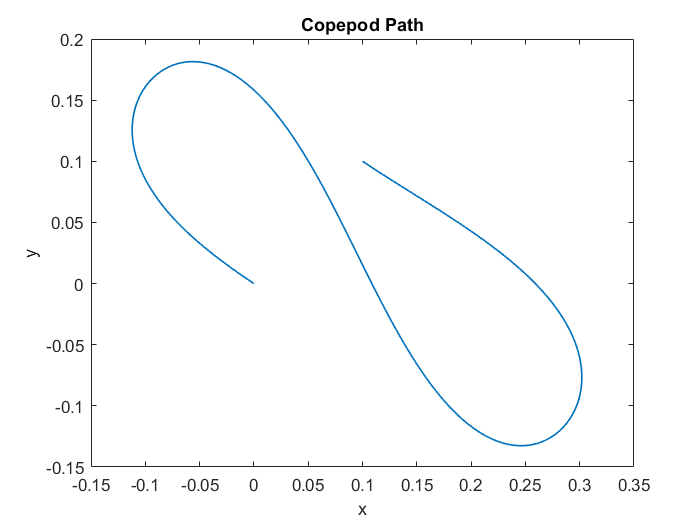}} 
            \subfigure[]{\includegraphics[width=4.69 cm]{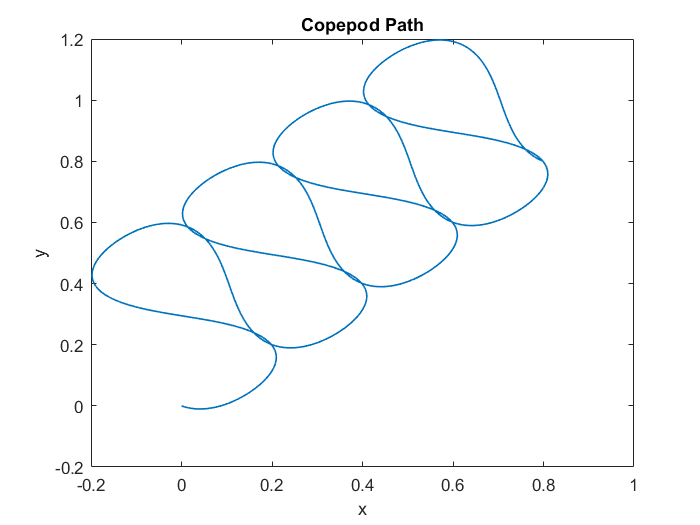}} 
               \subfigure[]{\includegraphics[width=4.69 cm]{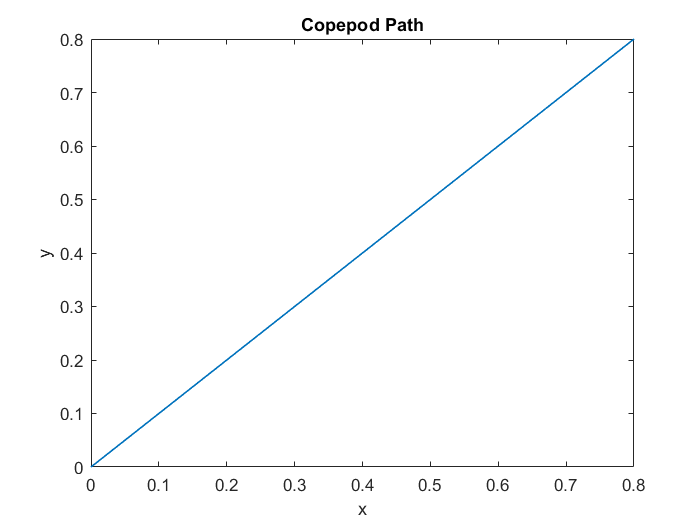}} 
        \subfigure[]{\includegraphics[width=4.69 cm]{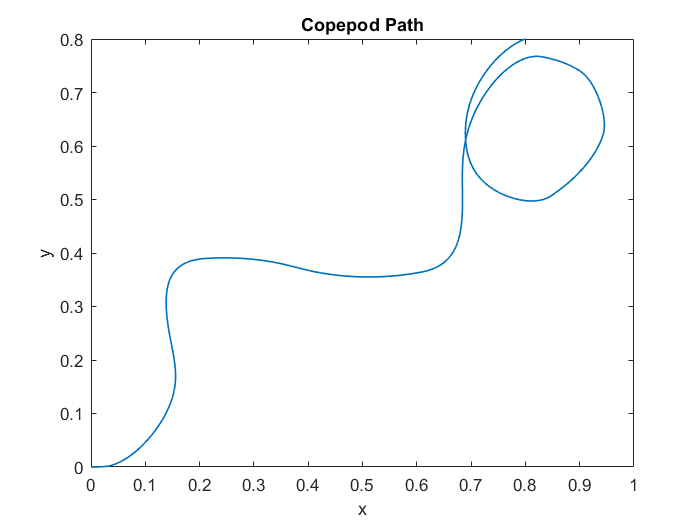}} 
            \subfigure[]{\includegraphics[width=4.69 cm]{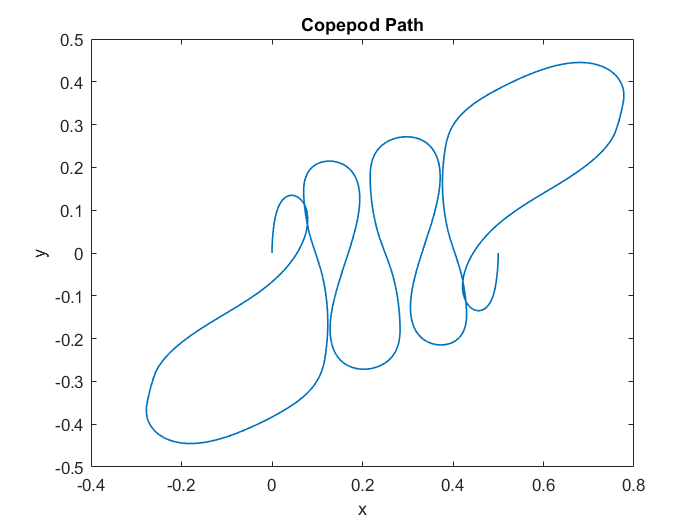}} 
               \subfigure[]{\includegraphics[width=4.69 cm]{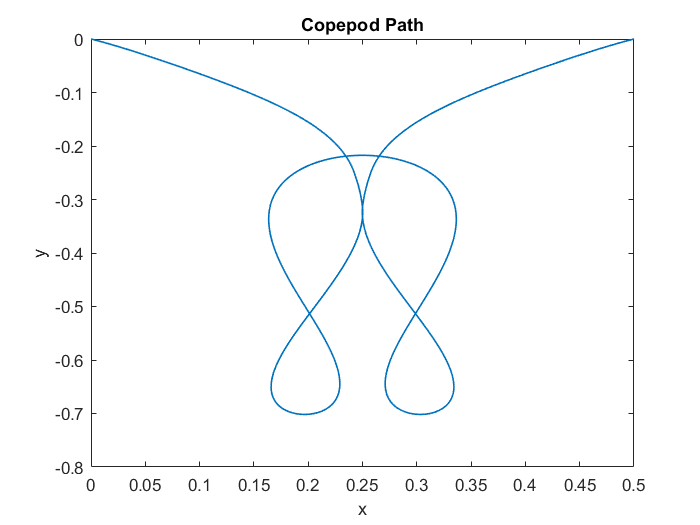}} 
        \subfigure[]{\includegraphics[width=4.69 cm]{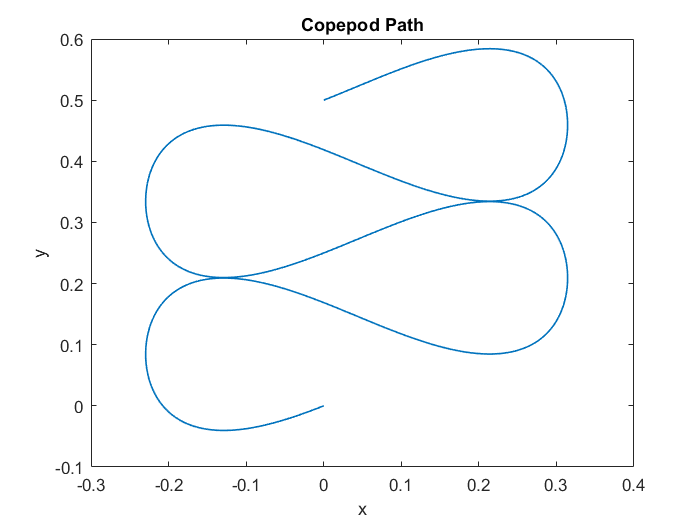}} 
            \subfigure[]{\includegraphics[width=4.69 cm]{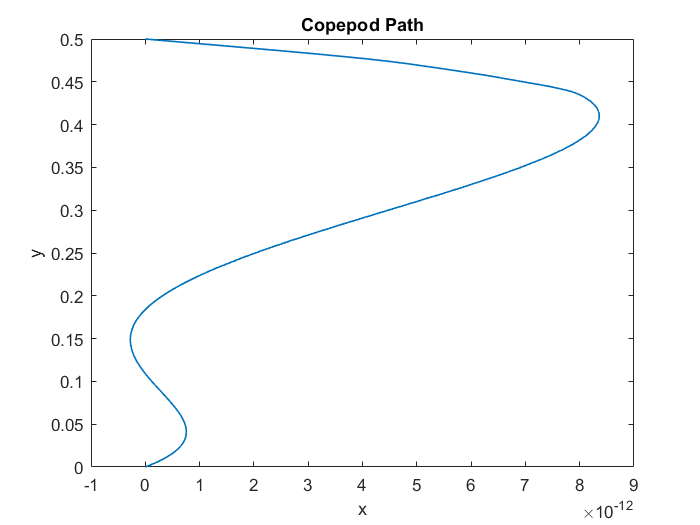}} 
               \subfigure[]{\includegraphics[width=4.69 cm]{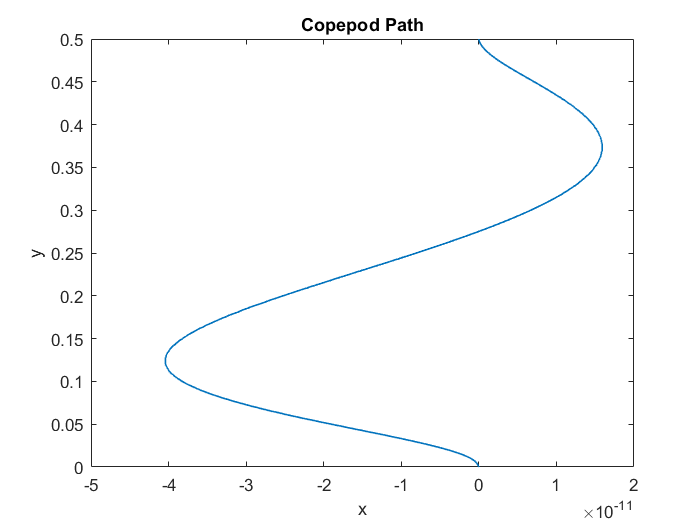}} 
        \subfigure[]{\includegraphics[width=4.69 cm]{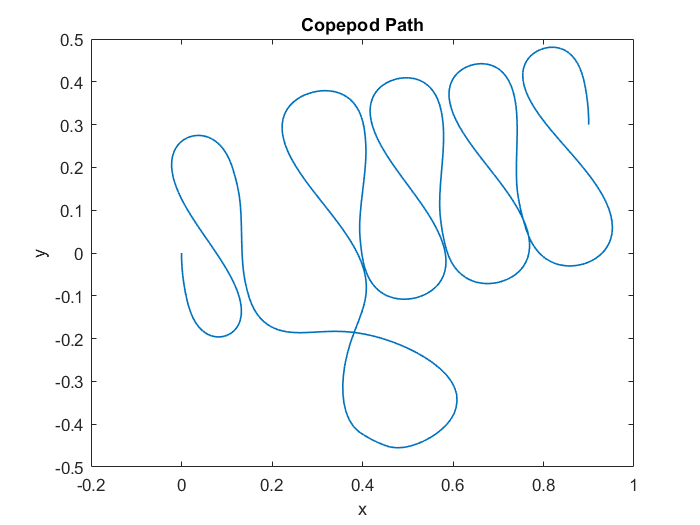}} 
            \subfigure[]{\includegraphics[width=4.69 cm]{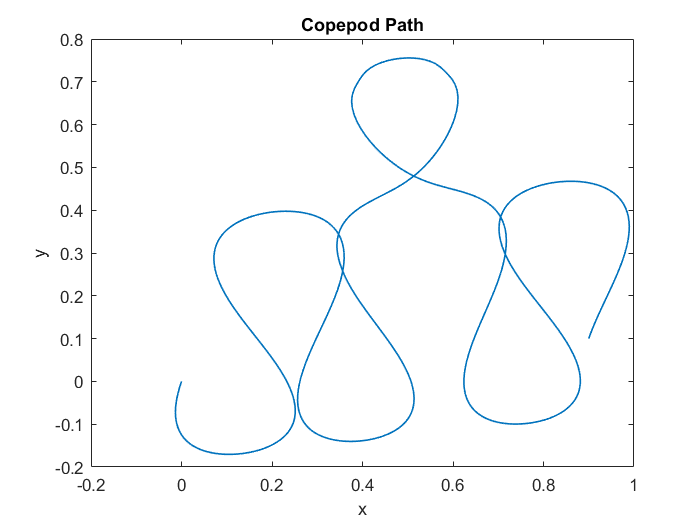}} 
               \subfigure[]{\includegraphics[width=4.69 cm]{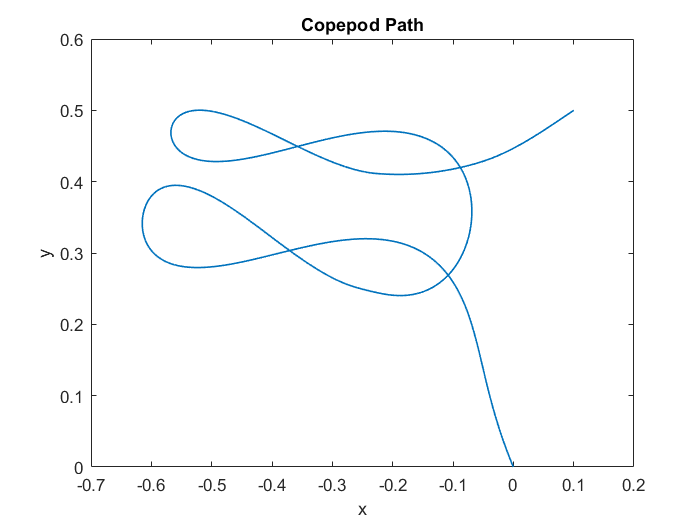}} 
        \subfigure[]{\includegraphics[width=4.69 cm]{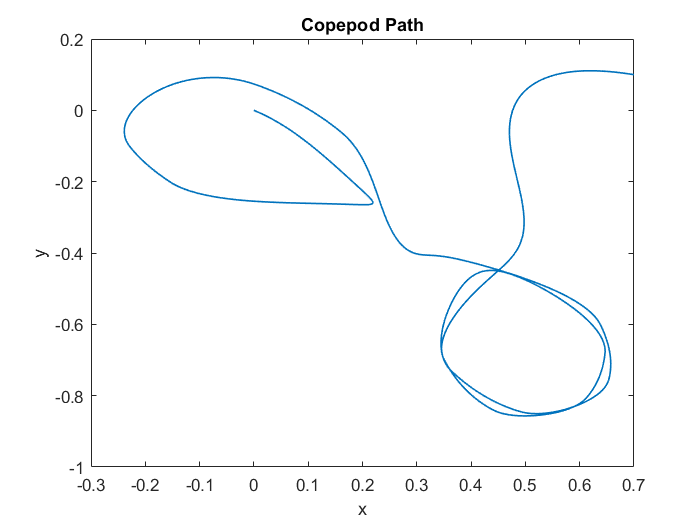}} 
            \subfigure[]{\includegraphics[width=4.69 cm]{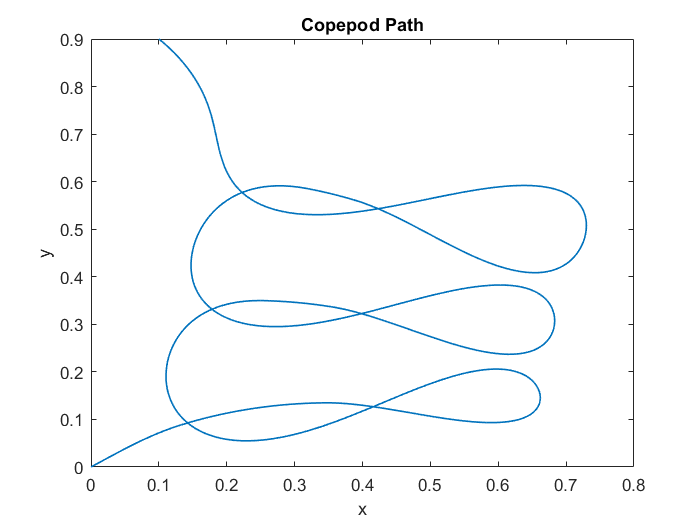}} 
    \caption{Gallery of simulated normal extremals showing copepod paths in the $xy$-plane.   In all cases the copepod begins at the origin $(0,0)$ and $t\in [0, 2\pi]$.   Additionally,  we specify boundary conditions for $x(2\pi), \, y(2\pi), \, \phi(0)$, and $\phi(2\pi)$; these are given in Table \ref{table}.   No other boundary conditions are imposed.  
      \label{fig: gallery}}
\end{figure}

\begin{table}[H] 
\caption{The boundary conditions corresponding to the paths in Figure \ref{fig: gallery}.  In all cases the initial position is $(0,0)$.  In some simulations certain conditions were unspecified, labeled ``Free" here.  For some simulations we impose the stroke condition that $\theta_i(0)=\theta_i(2\pi)$. The total energy of each path is not imposed, but listed here for comparison.
\label{table}}
\begin{tabular}{|c|c|c|c|c|c|}
 \hline 
\textbf{Subfigure}	& \textbf{Final position} 
& \textbf{Initial orientation}  & \textbf{Final orientation} & \textbf{Stroke} & \textbf{Energy}\\
 \hline 
(a)		& $(0.1, 0.1)$  & Free & Free & Yes & 3.868 \\  \hline 
(b)		& $(0.1, 0.1)$  & Free & $\phi(0)$ & Yes & 4.670 \\ \hline 
(c)		& $(0.8, 0.8)$  & Free & $\phi(0)$ & Yes & 215.560 \\ \hline 
(d)	    & $(0.8, 0.8)$  & Free & $\phi(0)$ & No & 1.895\\ \hline 
(e)	    & $(0.8, 0.8)$  & 0 & $0$ & No & 19.968 \\ \hline 
(f)		& $(0.5, 0)$  & 0 & 0 & Yes & 122.723\\ \hline 
(g)		& $(0.5, 0)$  & Free & Free & Yes & 27.745 \\ \hline 
(h)		& $(0, 0.5)$  & Free & $\phi(0)$ & Yes & 43.683 \\ \hline 
(i)		& $(0, 0.5)$  & Free & $\phi(0)$ & No & 0.319\\ \hline 
(j)		& $(0, 0.5)$  & Free & Free & No & 0.319 \\ \hline 
(k)		& $(0.9, 0.3)$  & 0 & 0 & Yes & 197.796\\ \hline 
(l)		& $(0.9, 0.1)$  & Free & $\phi(0)$ & Yes & 146.854 \\ \hline 
(m)		& $(0.1, 0.5)$  & 0 & Free & Yes & 30.321 \\ \hline 
(n)		& $(0.7, 0.1)$  & 0 & Free & Yes & 80.652 \\ \hline 
(o)		& $(0.1, 0.9)$  & Free & 0 & Yes & 91.949 \\ \hline 
\end{tabular}
\end{table}



\section{Discussion and Conclusions} \label{sec: discussion}

Here we have provided a mathematical model of a swimming copepod nauplius with two legs moving in a plane.  This model allows for both rotation and two-dimensional displacement by periodic deformation of the swimmer's body.
The system was studied from the framework of optimal control theory, with a simple cost function designed to approximate the mechanical energy expended by the copepod.  We have found that this model is sufficiently realistic to recreate behavior similar to those of observed copepod nauplii, yet much of the mathematical analysis is tractable.  In particular, we have shown that the system is controllable, but there exist singular configurations where the degree of non-holonomy is non-generic.  We have also partially characterized the abnormal extremals and provided explicit examples of families of abnormal curves. Finally, we have numerically simulated normal extremals and observed some interesting and surprising phenomena.

This work suggests a plethora of interesting open problems and directions for future research.  First, there are a number of potential generalizations and modifications to our model which may lead to even more realistic behaviors.  For example, one can study the model with four or six legs coupled with the appropriate constraints.  Real copepods have six legs. Further, this model has the potential to design soft small-scale synthetic robots \cite{optimal3,Hu, Sitti}.   Alternatively, or perhaps additionally, one could work in a three-dimensional environment, which is obviously more realistic.  Moreover, there are other reasonable cost functions to consider, including more complicated versions of mechanical energy.  Instead,  it may be that copepods seek to minimize the time needed to perform a given motion, or the total distance traveled, either to evade predators or capture prey more effectively.  

Without generalization, our current model already offers ideas for future research.  In particular, it would be quite interesting to find a mathematical, physical, or biological explanation for the observed elastica-like paths, such as the one shown in Figure \ref{fig: elastica}.  Even numerical verification that these paths are indeed forms of elastica would be worth pursuing.    A potential approach is described in Section \ref{subsec: normal}, which in turn leads to other questions of a differential geometric flavor.  Our optimal control problem can indeed be cast as the geodesic problem for a particular sub-Riemannian geometry, which appears geometrically interesting.  The vast sub-Riemannian literature may yield geometric or metric tools providing deeper insight into the copepod system.  

While the experimental approach in Section \ref{subsec: normal} led to some interesting observations, the normal extremals are still largely not understood.  It would be particularly interesting to explore path-planning for the copepod system.  It is also important to recognize that \texttt{Bocop}, like any mathematical software, has limitations, some of which we encountered.  In particular, this is local optimization software, and we are not working on a convex optimization problem with one global extremum.  Thus, despite the symmetries of the problem, the numerical results were sensitive to transversality conditions (for example, specifying that copepod start at the origin).

Finally, we consider how well our results approximate actual observations of copepods in motion. In \cite{Bradley} the authors discuss copepod swimming and escape behavior, based on observations of their swimming patterns and activity. In particular, Figure 4 in that paper shows a helical pattern projecting onto the $xy$-plane like an ellipse. This correlates with our motion presented in Figure \ref{fig: triangle 2.5}. Figure 8 in \cite{Bradley} depicts escape trajectories for nauplii and copepodis which follow helical patterns that project on the $xy$-plane as Euler elastica. In \cite{Bruno}, the authors observe the positions of the appendages during prey capture and prey handling; in their Figure 5 we see the leg motions are oscillatory and mostly periodic, as in our simulations, during the prey handling phase. The most striking comparison comes with the observed behavior provided in \cite{Paffenhofer}. Indeed, the projection of the 3D swimming motion of the nauplii and early copepodid in their observations provide a similar complexity to our simulated trajectories.  Compare our Figure \ref{fig: gallery} to Figures 1 through 9 in their paper. It is quite remarkable that despite the simplified assumptions made on the number of legs and the cost, our results still capture the essence of swimming behavior for copepods. 





\end{document}